\tikzset{vertex/.style={circle,fill=black,inner sep=1pt,outer sep=2pt},
         mvertex/.style={rectangle,draw=black,thick,inner sep=2pt,outer sep=2pt},
         tvertex/.style={inner sep=1pt,font=\scriptsize},
         unvertex/.style={circle,fill=white,draw=white,inner sep=1pt}, 
         WhiteBK/.style={circle,fill=white,inner sep=0pt},
         fill1/.style={fill=black!20,draw=black!20},
         fill1/.style={fill=black!20,draw=black!20},
         fill2/.style={fill=black!40,draw=black!40},
         fill12/.style={fill=black!60,draw=black!60},
         fillwhite/.style={fill=white,draw=white},
         >=stealth',
         leadsto/.style={-angle 90,decorate,decoration=snake,very thick},
         cut/.style={decorate,decoration=saw,very thick}
}
\newcommand{\replacevertex}[3][fill=white,draw=white]
 {
  \node at #2 [#1,circle,inner sep=5pt] {};
  \node #2 at #2 #3;
 }
\newtheorem{theorem}{Theorem}[section]
\newtheorem{corollary}[theorem]{Corollary}
\newtheorem{proposition}[theorem]{Proposition}
\theoremstyle{definition}
\newtheorem{definition}[theorem]{Definition}
\newtheorem{construction}[theorem]{Construction}
\newtheorem{setup}[theorem]{Setup}
\theoremstyle{definition}
\newtheorem{example}[theorem]{Example}
\theoremstyle{remark}
\newtheorem*{remark}{Remark}
\theoremstyle{definition}
\DeclareMathOperator{\add}{add}
\DeclareMathOperator{\Aut}{Aut}
\newcommand{\comp}{\operatorname{\scriptstyle\circ}}
\DeclareMathOperator{\Ext}{Ext}
\DeclareMathOperator{\End}{End}
\DeclareMathOperator{\Hom}{Hom}
\newcommand{\op}{{\operatorname{op}\nolimits}}
\DeclareMathOperator{\Pot}{Pot}
\newcommand{\Sum}{\sum\limits} 
\newcommand{\extto}{\xrightarrow}
\newcommand{\from}{\colon \!}
\newcommand{\lperp}[1]{\sideset{^\perp}{}{\operatorname{#1}}}
\newcommand{\rperp}[1]{\sideset{}{^\perp}{\operatorname{#1}}}
\newcommand{\shift}[1]{\langle #1 \rangle}
\newcommand{\dT}{ \sideset{_\ocT}{}{ \operatorname{T} } }
\newcommand{\downstairs}[1]{ \sideset{_\ocT}{}{ \operatorname{#1} } }
\newcommand{\uT}{ \sideset{_\cT}{}{ \operatorname{T} } }
\newcommand{\upstairs}[1]{ \sideset{_\cT}{}{ \operatorname{#1} } }
\newcommand{\bbA}{ \mathbb{A}}  
\newcommand{\bbD}{ \mathbb{D}}  
\newcommand{\bbE}{ \mathbb{E}}  
\newcommand{\bbZ}{ \mathbb{Z}}  
\newcommand{\cA}{ \ensuremath{ {\mathcal A} } }
\newcommand{\cC}{ \ensuremath{ {\mathcal C} } }
\newcommand{\cD}{ \ensuremath{ {\mathcal D} } }
\newcommand{\cE}{ \ensuremath{ {\mathcal E} } }
\newcommand{\cH}{ \ensuremath{ {\mathcal H} } }
\newcommand{\cJ}{ \ensuremath{ {\mathcal J} } }
\newcommand{\cP}{ \ensuremath{ {\mathcal P} } }
\newcommand{\cT}{ \ensuremath{ {\mathcal T} } }
\newcommand{\cU}{ \ensuremath{ {\mathcal U} } }
\newcommand{\cX}{ \ensuremath{ {\mathcal X} } }
\newcommand{\cZ}{ \ensuremath{ {\mathcal Z} } }
\newcommand{\wpi}{ \widetilde{\pi}}
\newcommand{\oa}{ \overline{a}}
\newcommand{\ob}{ \overline{b}}
\newcommand{\od}{ \overline{d}}
\newcommand{\om}{ \overline{m}}
\newcommand{\ocD}{ \overline{\cD}}
\newcommand{\ocT}{ {\overline{\cT}} }
\newcommand{\ocU}{ {\overline{\cU}} }
\newcommand{\ocZ}{ \overline{\cZ}}
\newcommand{\oD}{ \overline{D}}
\newcommand{\oT}{ \overline{T}}
\newcommand{\oQ}{ \overline{Q}}
\newcommand{\oX}{ \overline{X}}
\newcommand{\oY}{ \overline{Y}}
\newcommand{\oZ}{ \overline{Z}}
\newcommand{\laT}{{\overleftarrow{\mathrm T}}}
\newcommand{\laY}{{\overleftarrow{Y}}}
\numberwithin{figure}{section}
\numberwithin{table}{section}
\numberwithin{equation}{section}
\title[Mutating loops and 2-cycles]{Mutating loops and
  2-cycles in 2-CY triangulated categories}
\date{\today}
\author[Bertani-{\O}kland]{Marco Angel Bertani-{\O}kland}
\author[Oppermann]{Steffen Oppermann}
\address{Institutt for matematiske fag\\ NTNU\\ 7491 Trondheim\\ Norway}
\email{Marco.Tepetla@math.ntnu.no}
\email{Steffen.Oppermann@math.ntnu.no}
\subjclass[2010]{Primary: 16G20; Secondary: 18E30; 16G70}
\begin{document}

\begin{abstract}
We derive an algorithm for mutating quivers of 2-CY tilted algebras that have loops and 2-cycles, under certain specific conditions. Further, we give the classification of the 2-CY tilted algebras coming from standard algebraic 2-CY triangulated categories with a finite number of indecomposables. These form a class of algebras that satisfy the setup for our mutation algorithm. 
\end{abstract}

\maketitle
\section{Introduction and Results}

 Mutation has played an important role in representation theory during the recent years, especially in tilting and cluster-tilting theory. For instance, let $\cH$ be a hereditary abelian $k$-category over a field $k$, with finite dimensional $\Hom$-spaces and $\Ext^{1}$-spaces having a tilting object $T$. If $\cH$ has no nonzero projective (or nonzero injective) objects, we know that every almost complete tilting object (that is, a tilting object where one indecomposable summand is removed) has exactly two complements (see \cite{Hubner2,BMRRT,BOW2}). Thus we can always replace any indecomposable summand of $T$, to obtain a new tilting object $T'$. This procedure is called (tilting) mutation in $\cH$. Unfortunately, this procedure does not work in general if $\mathcal{H}$ has nonzero projectives.

Fortunately, there exists a generalization of tilting theory, where mutation is always possible: {\em cluster-tilting theory}. The approach goes as follows: Trying to model cluster algebras from a categorical point of view, the authors in \cite{BMRRT} introduced the cluster category $\cC_H$ for a hereditary algebra $H$ (see also \cite{CCS1} for the $\bbA_n$ case), and more generally for a hereditary category $\cH$ with a tilting object. The cluster category comes equipped with a class of objects called {\em cluster-tilting objects}. There is a mutation of cluster-tilting objects in cluster categories, and more generally, in $\Hom$-finite triangulated $2$-Calabi-Yau categories ($2$-CY for short) over an algebraically closed field $k$ (see \cite{IY}).

Associated to a tilting object $T$ in $\cH$ is the endomorphism algebra $\End_{\cH}(T)$, called {\em quasi-tilted algebra} (\cite{HRS}). The mutation of tilting objects induces  a mutation of quasi-tilted algebras (see \cite{Hubner}). Similarly, associated to a cluster-tilting object $T$ in $\cC$ is the endomorphism algebra $\End_{\cC}(T)$, called {\em cluster-tilted algebra} (\cite{BMR2}). The mutation of cluster-tilting objects also induces a mutation of cluster-tilted algebras, and further a mutation of their quivers (see \cite{BIRSm}). It coincides (see \cite{BMR3,BIRSc}) with the quiver mutation rule given by S. Fomin and A. Zelevinsky in \cite{FZ1} in their theory of cluster algebras.

 The main limitation of this quiver mutation rule is, that one only mutates at vertices not lying in loops or 2-cycles. At the categorical level, there is no such restriction. Therefore one would expect that there is a way to generalize the quiver mutation rule to vertices lying on loops and 2-cycles. The aim of this paper is to give a procedure for mutating quivers of $2$-CY tilted algebras (the endomorphism rings of cluster-tilting objects in $2$-CY triangulated categories) at vertices with loops and 2-cycles, under some special conditions.
 
The setup is the following: We assume to have a Galois covering $\pi\from \cT \to \ocT$ of algebraic 2-CY triangulated categories, where no cluster-tilting object in $\cT$ has  loops or 2-cycles. For a (basic) cluster-tilting object $\dT$ in $\ocT$ having loops and/or 2-cycles, we denote by $\uT$ its lift in $\cT$. Then $\uT$ is a cluster-tilting object in $\cT$ (see Proposition~\ref{prop.lift.cto}). 

 First, we develop a procedure to replace the fibre of an indecomposable summand of $\dT$ in $\uT$, in order to  obtain a new cluster-tilting object $\uT'$ in $\cT$. Then we show that $\pi(\uT')$ coincides with replacing the corresponding indecomposable summand of $\dT$ in $\ocT$.

 Second, we develop a corresponding procedure at the level of quivers. That is, if we denote by $\oQ$ (resp.\ $Q$) the quiver of the $2$-CY tilted algebra associated to $\dT$ (resp.\ $\uT$), we give a method to mutate at any given vertex $v$ of $\oQ$ by mutating its cover $Q$ at the fibre $\pi^{-1}(v)$, where $\pi\from Q \to \oQ$ also denotes the induced covering morphism of quivers. 

 Finally, we give the classification of the $2$-CY tilted algebras of finite type. We show that this class of algebras satisfies the setup for our mutation algorithm, and organize them according to their mutation classes.

The constructions above rely heavily on a reduction technique by \cite{IY}, and a generalized mutation rule for algebraic $2$-CY triangulated categories by \cite{Palu}.

The paper is organized as follows.

In Section~\ref{section.background} we define the notation we use and recall some basic results on mutation of quivers, quivers with potentials, cluster categories, coverings, Palu's generalized mutation rule for algebraic $2$-CY triangulated categories, and Iyama-Yoshino's reduction technique.

In Section~\ref{section.mutation.cycles}, we develop the theory to replace, at the same time, several summands of a cluster-tilting object in an algebraic $2$-CY triangulated category. Then we derive our rule to mutate at (minimal) oriented cycles of $2$-CY tilted algebras (due to the lengh of the calculations, they are postponed to Appendix~\ref{appendix.ugly}). Furthermore, we prove that mutating at cycles is equivalent to a sequence of FZ-mutations.

In Section~\ref{section.mut.loops.2cycles} we present the algorithm to mutate quivers of $2$-CY tilted algebras having loops and/or 2-cycles.
  
Finally, in Section~\ref{section.2CY.finite.type} we present the classification of the $2$-CY tilted algebras coming from standard algebraic 2-CY triangulated categories with a finite number of indecomposables. By using our mutation procedure, we are able to mutate at any vertex in the quivers of these algebras.

\section{Background} \label{section.background}
\subsection{Conventions} \label{sec.conventions}
Fix $k$ an algebraically closed field. When we say that $\cC$ is a category, we always assume that $\cC$ is $k$-linear additive with finite dimensional morphism spaces.

 Let $\cC$ be a category.  We denote by $\cC(X,Y)$ or by $(X,Y)$ the set of morphisms from $X$ to $Y$ in $\cC$. An {\em ideal} $I$ of $\cC$ is an additive subgroup $I(X,Y)$ of $\cC(X,Y)$ such that $fgh\in I(W,Z)$ whenever $f\in\cC(W,X)$, $g\in I(X,Y)$, and $h\in\cC(Y,Z)$. For an ideal $I$ of $\cC$, we write $\cC/I$ for the category whose objects are the objects of $\cC$ and whose morphisms are given by $\cC/I(X,Y)=\cC(X,Y)/I(X,Y)$ for $X,Y \in \cC/I$. 

When we say that $\cD$ is a subcategory of $\cC$, we always mean that $\cD$ is a full subcategory that is closed under isomorphisms, direct sums, and direct summands. We denote by $[\cD]$ the ideal of $\cC$ consisting of morphisms that factor through objects in $\cD$. Thus we can form the category $\cC/[\cD]$. 

A morphism $f$ is said to be {\em right minimal} if it does not have a summand of the form $X_0 \to 0$ as a complex, for a nonzero object $X_0\in\cC$. For a subcategory $\cD$ of $\cC$, a morphism $f$ is called a {\em right $\cD$-approximation} of $Y\in\cC$ if $X\in \cD$ and
\[
  \cC(-,X) \extto{(-,f)} \cC(-,Y) \to 0, 
\]
is an exact sequence of functors on $\cD$.  We say that a right $\cD$-approximation is {\em minimal} if it is right minimal. A subcategory $\cD$ is called a {\em contravariantly finite subcategory} of $\cC$ if any $Y\in\cC$ has a right $\cD$-approximation. Dually one defines a {\em left $\cD$-approximation} and a {\em covariantly finite subcategory}. A contravariantly and covariantly finite subcategory is said to be {\em functorially finite}.

Let $\cX$ be a subcategory of a category $\cT$. Define $\rperp{\cX}$ to be the subcategory of all $T\in\cT$ such that $(\cX,T)=0$. Dually, $\lperp{\cX}=\{T\in\cT | (T,\cX)=0\}$. 

A $k$-linear autofunctor $\nu\from \cT \to \cT$ of a triangulated category $\cT$ is called a {\em Serre functor} of $\cT$ if there is a functorial isomorphism $(X,Y)\simeq D(Y,\nu X)$ for any $X,Y\in\cT$, where $D$ denotes the usual $k$-duality. If $\cT$ has a Serre functor, then it is unique (up to natural isomorphism). We say that $\cT$ is {\em $n$-Calabi-Yau} ($n$-CY for short) for an integer $n\in\bbZ$ if $\nu=[n]$.

A triangulated category is {\em algebraic} if it is triangle equivalent to the stable category of a Frobenius exact category. All triangulated categories occuring throughout this paper are assumed to be algebraic.

\subsection{Quiver mutation}\label{section.quiver_mutation}

 Let $Q$ be a finite quiver with vertices $1,\ldots, n$ having no loops or $2$-cycles. Let $q_{i,j}$ denote the number of arrows from $i$ to $j$ minus the number of arrows from $j$ to $i$ in $Q$. The terms $q_{i,j}$ can be collected in a matrix called the {\em skew-symmetric matrix} associated to $Q$. In this setting, Fomin-Zelevinsky \cite{FZ1} defined a mutation rule on quivers. For a vertex $\ell$, we obtain the new quiver $\mu_{\ell}(Q)$ as follows: The skew-symmetric matrix $(q_{i,j}')$ associated to $\mu_{\ell}(Q)$ is given by
\[
 q'_{i,j} =
 \begin{cases}
     -q_{i,j} & \text{if $i=\ell$ or $j=\ell,$} \\
     q_{i,j} + \frac{|q_{i,\ell}|q_{\ell,j} + q_{i,\ell}|q_{\ell,j}|}{2} & \text{otherwise.}\\
 \end{cases}
\]
We say that $Q$ and $\mu_{\ell}(Q)$ are mutations of one another. Observe that $\mu_{\ell} (\mu_{\ell}(Q))\simeq Q$. The collection of all quivers which are iterated mutations of $Q$ is called the {\em mutation class} of $Q$.

\subsection{Quivers with potentials}\label{sec.QP} We follow \cite{DWZ}. Let $Q$ be a quiver (possibly with loops and $2$-cycles). Denote by $Q_{0}$ its set of vertices (or, equivalently, the paths of length zero) and by $Q_{i}$ the paths of length $i$ in $Q$, where $i$ is a positive integer. Let $kQ_{i}$ be the  $k$-vector space with basis $Q_{i}$ and denote by $kQ_{i}^{c}$ the subspace of $kQ_{i}$ generated by all the cycles. The complete path algebra of $Q$ is then defined as
\[ \widehat{kQ} = \prod_{i\ge 0} kQ_{i}, \]
that is, the completion of the path algebra $kQ$ with respect to the ideal generated by all the arrows. For an integer $m\ge 2$ let  
\[ \Pot_{m}(kQ) =\prod_{i\ge m}kQ_{i}^c .\]
An element $\cP\in \Pot_{2}(kQ)$ is called a {\em potential} on $Q$. It is called {\em reduced} if $\cP \in \Pot_{3}(kQ)$. We say that two potentials are {\em cyclically equivalent} if their difference is in the closure of the span of all elements of the form $ p_{1} \cdots p_{d} - p_{2}\cdots p_{d} p_{1}$ where $p_{1}\cdots p_{d}$ is a cyclic path.

For an arrow $p$ of $Q$ we define $\partial_{p}\from \Pot_{2}(kQ) \to \widehat{kQ}$ as the continuous $k$-linear map taking a cycle $c$ to the sum $\Sigma_{c=xpy} yx$ taken over all decompositions of the cycle $c$ (where $x$ or $y$ are possibly paths of length zero). It is clear that two cyclically equivalent paths have the same image under $\partial_{p}$. We call $\partial_{p}$ the {\em cyclic derivative with respect to} $p$.

 Let $\cP$ be a potential on $Q$ such that no two cyclically equivalent cyclic paths appear in $\cP$. We call the pair $(Q,\cP)$ a {\em quiver with potential} (or QP for short). Associated to a QP we have a {\em Jacobian algebra} defined as
 \[ \cJ (Q,\cP) = \widehat{kQ}/I(\cP), \]
 where $I(\cP)$ is the closure of the ideal generated by all $\partial_{p}\cP$ where $p$ runs over all arrows of $Q$.
 
 For further details we refer the reader to \cite{DWZ}.
 
\subsection{Cluster categories} \label{section.cluster.cats}
The cluster category $\cC$ of a hereditary category $\cH$ was introduced in \cite{BMRRT}. It is defined as the orbit category $\cD/F$, where $F$ is the automorphism $\tau^{-1}[1]$ with $\tau$ the Auslander Reiten translation, and $[1]$ the shift functor of $\cD:=D^b(\cH)$.  
This gives an algebraic triangulated (\cite{Keller}) and Krull-Schmidt (\cite{BMRRT}) category, which is an important tool for studying the tilting theory of $\cH$. It also models the combinatorics of the cluster algebras introduced by Fomin and Zelevinsky (see \cite{FZ1}) in a natural way.

The objects of $\cC$ are the same as in $\cD$. Given objects $X$ and $Y$ in $ \cD$, the space of morphisms $\Hom_{\cC}(X, Y)$ is  defined as $\coprod_{i \in \bbZ} \Hom_{\cD}(F^iX,Y)$. The  morphisms in $\cC$ are thus induced by morphisms and extensions in $\cH$.

The category has an important set of objects, namely the \emph {cluster-tilting objects}. These are maximal rigid objects (maximal with respect to the number of non-isomorphic indecomposable summands). If $T$ is a cluster-tilting object in $\cC$, then $\End_{\cC}(T)$ is called a \emph{cluster-tilted algebra} (\cite{BMR2}).

We shall be particularly interested in the cluster-tilted algebras of finite type. These were characterized by \cite{BMR2} as follows: Let $B=\End_\cC(T)^{\op}$ be a cluster-tilted algebra with $\cC=\cC_H$ the cluster category of some hereditary algebra $H$, and $T$ a tilting object in $\cC$. We then have that $B$ is of finite representation type if and only if $H$ is of finite representation type. In this case $H$ is the path algebra of a Dynkin quiver $Q$, and the underlying graph $\Delta$ of $Q$ is one of $\{\bbA_n,\bbD_m,\bbE_6,\bbE_7,\bbE_8\}$ for $n\ge1$ and $m \ge 4$. In this case,  we say that $\cC$ and $B$ are of type $\Delta$. 

Finite type cluster-tilted algebras are (up to Morita equivalence) determined uniquely by their quiver by \cite{BMR1}. Furthermore, their relations are determined by a potential, which is given by the sum of all minimal cycles of the corresponding quiver (\cite{BMR1})\footnote{The result in \cite{BMR1} used a different equivalent description.}. Recall that a cycle is {\em minimal} if the subquiver generated by the cycle contains only arrows of the cycle and every vertex appears only once.

\subsection{Coverings} We follow \cite{Gabriel}. However our notation varies slightly from \cite{Gabriel}, since our categories have finite direct sums and isomorphic objects which are not equal.

A Krull-Schmidt category $\cC$ is called {\em locally bounded} if for any $X \in \cC$ there are only finitely many isomorphism classes of indecomposables $Y$ such that $(X,Y) \neq 0$, and only finitely many isomorphism classes of indecomposables $Z$ such that $(Z,X) \neq 0$.

A ($k$-linear) functor $F\from \cC \to \cD$  is called a {\em covering functor} if the induced maps
 \[
  \bigoplus \cC(X,Y) \to \cD(FX,A) \text{ and } \bigoplus \cC(Y,X) \to \cD(A,FX)
 \]
are bijective for all $X \in \cC$ and indecomposable $A \in \cD$, where in both cases the sum runs over all isomorphism classes of objects $Y$ such that $FY \simeq A$.

 Let $\cC$ be a locally finite-dimensional category and $G$ a group of ($k$-linear) automorphisms of $\cC$. Assume that the action of $G$ on $\cC$ is {\em free} (that is $gX \not\simeq X$ for each $X\in \cC$ and $1\ne g \in G$) and {\em locally bounded} (for each pair $X,Y$ there are only finitely many $g\in G$ such that $(X,gY)\simeq (g^{-1}X,Y)\ne 0$). Then we have the following proposition.
 
 \begin{proposition}[{\cite[3.1]{Gabriel}}] The quotient $\cC/G$ exists in the category of all locally finite-dimensional $k$-categories, and the canonical projection $\pi \from \cC \to \cC/G$ is a covering functor.
 \end{proposition}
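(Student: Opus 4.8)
The plan is to construct $\cC/G$ explicitly and then to read off the covering property almost directly from the construction; the freeness and local boundedness hypotheses enter at two clearly separated points. I would take as objects of $\cC/G$ the $G$-orbits $\oX = \{gX \mid g \in G\}$ of the objects of $\cC$, and define morphisms by means of equivariant families: for orbits $\oX$ and $\oY$, set
\[ (\cC/G)(\oX, \oY) = \bigl\{\, (f_{Y', X'})_{X' \in \oX,\, Y' \in \oY} \ \big|\ f_{Y',X'} \in \cC(X', Y'),\ \ g\, f_{Y', X'} = f_{gY', gX'}\ \text{for all } g \in G \,\bigr\}. \]
Composition is componentwise composition in $\cC$, namely $(h f)_{Z', X'} = \sum_{Y' \in \oY} h_{Z', Y'} f_{Y', X'}$, so associativity, bilinearity and the remaining category axioms are inherited from $\cC$ for free; the identity of $\oX$ is the equivariant family that equals $\mathrm{id}_{X'}$ on the diagonal and is zero off it, which is equivariant since $g\,\mathrm{id}_{X'} = \mathrm{id}_{gX'}$.

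Because the action is free, each orbit has trivial stabiliser, so fixing one representative $X \in \oX$ identifies each equivariant family with the single slice $(f_{Y', X})_{Y' \in \oY}$; choosing a representative $Y$ of $\oY$ as well, this yields the more compact description
\[ (\cC/G)(\oX, \oY) \;\cong\; \bigoplus_{g \in G} \cC(X, gY). \]
The local boundedness hypothesis says precisely that only finitely many $g \in G$ contribute a nonzero summand, so each morphism space is finite dimensional; hence $\cC/G$ is a locally finite-dimensional $k$-category (and Krull--Schmidt, the endomorphism rings being finite dimensional). I would then define the projection by $\pi(X) = \oX$ on objects, sending a morphism $f \in \cC(X, Y)$ to the equivariant family supported on the pairs $(gY, gX)$ with value $g f$. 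Freeness guarantees this is well defined (if $gX = g'X$ then $g = g'$), and a short check gives $\pi(\mathrm{id}_X) = \mathrm{id}_{\oX}$ together with compatibility with composition, so $\pi$ is a $k$-linear functor.

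For the covering property, fix $X \in \cC$ and an indecomposable $A \in \cC/G$, say $A = \oY$. Since the action is free, the objects of $\cC$ mapping to $A$ are exactly the members of the orbit $\oY$, and these represent pairwise non-isomorphic classes, so they are indexed bijectively by $G$. Under the identification above, the induced map
\[ \bigoplus_{\pi Y' \simeq A} \cC(X, Y') \;=\; \bigoplus_{g \in G} \cC(X, gY) \;\longrightarrow\; (\cC/G)(\pi X, A) = (\cC/G)(\oX, \oY) \]
is precisely the assignment sending a tuple $(f_g)_g$ to the equivariant family whose slice at $X$ equals $(f_g)_g$, which is a bijection by the equivariant-family description; the second required bijection is symmetric. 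I expect the only genuine work, and the main obstacle, to be the bookkeeping that makes the two descriptions of morphism spaces agree compatibly with composition: one must check carefully that freeness is exactly what upgrades the covering map from a surjection to a bijection (it rules out stabilisers and repetitions within an orbit), while local boundedness is exactly what keeps all the direct sums finite dimensional. If the phrase \emph{exists in the category of locally finite-dimensional categories} is read in the stronger sense of a quotient satisfying a universal property, one verifies in addition that $\pi$ is initial among $G$-invariant $k$-linear functors out of $\cC$, which again drops out of the equivariant-family description.
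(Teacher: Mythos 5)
The paper itself offers no proof here---the proposition is quoted from Gabriel---so the only meaningful comparison is with Gabriel's construction, which is essentially what you reproduce: orbits as objects, $G$-equivariant families as morphisms, and the slice identification $(\cC/G)(\oX,\oY)\cong\bigoplus_{g\in G}\cC(X,gY)$. The category axioms, the role of local boundedness (which, note, is also what makes the matrix-product composition $\sum_{Y'}h_{Z',Y'}f_{Y',X'}$ a finite sum---you leave this implicit but the needed fact is the one you state for finite-dimensionality), and your computation that the slice at the representative $X$ of $\sum_g\pi(f_g)$ is exactly the tuple $(f_g)_g$ are all correct.

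The genuine gap is in the final step, where you assert that the isomorphism classes of objects $Y'$ with $\pi Y'\simeq A$ are ``exactly the members of the orbit $\oY$, indexed bijectively by $G$.'' Freeness gives only half of this: it shows the $gY$ are pairwise non-isomorphic (if $gY\simeq g'Y$ then $(g'^{-1}g)Y\simeq Y$, so $g=g'$), hence the orbit injects into the index set of the sum. But the covering-functor definition in this paper sums over all isomorphism classes with $\pi Y'\simeq A$, not $\pi Y'=A$, and these are not tautologically the same: this paper's conventions explicitly allow isomorphic objects that are not equal (unlike Gabriel's setting, where the two conditions coincide and your shortcut would be harmless). So you must still prove that any $Y''$ with $\pi Y''\simeq \oY$ is isomorphic to some $gY$. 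This is where the Krull--Schmidt property enters: writing the isomorphism $\pi Y''\to \oY$ and its inverse as equivariant families $\phi$ and $\psi$, the $(Y'',Y'')$-component of $\psi\phi=\mathrm{id}$ reads $\sum_g \psi_{Y'',gY}\,\phi_{gY,Y''}=\mathrm{id}_{Y''}$; since $\End_{\cC}(Y'')$ is local and the sum is finite, some term is a unit, so $Y''$ is a direct summand of the indecomposable $gY$, hence $Y''\simeq gY$. (One also needs the small observation that every indecomposable $A$ of $\cC/G$ equals $\oY$ for some indecomposable $Y$, which follows because the $k$-linear functor $\pi$ preserves finite direct sums.) With these additions your argument is complete.
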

 
 Suppose that we have a covering functor $F\from \cC \to \cD$ such that $Fg=F$ for all $g\in G$. Such a functor induces an isomorphism $\cC/G \simeq \cD$ if and only if $F$ is surjective on the objects and $G$ acts transitively on the fiber $F^{-1}(A)$ for each object $A\in \cD$. If this is the case, we call $F$ a {\em Galois covering}. 
 
 For further details we refer the reader to \cite{BG,Gabriel}.

\subsection{Palu's formula}
\label{section.Palu}
 In this subsection we recall the generalized mutation rule for algebraic 2-CY triangulated categories from \cite{Palu}. This formula is one of the key ingredients in our algorithm for mutating at loops and 2-cycles. We state the result in the following setting: Let $T$ be a cluster-tilting object in the algebraic 2-CY triangulated category $\cT$. Delete the loops and oriented 2-cycles from the quiver of $(T,T)$, and denote the remaining quiver by $Q$. Let $M=(m_{i,j})$ be the skew-symmetric matrix (see Subsection~\ref{section.quiver_mutation}) associated to $Q$. Furthermore, let $T'\in \cT$ be another cluster-tilting object, and define the quiver $Q'$ and the skew-symmetric matrix $M'$ in a likewise manner.

We now approximate $T$ with respect to $T'$. We can write $T=\bigoplus_j T_j$ with $T_j$ indecomposable, and decompose $T'$ similarly. We have that for each $j$ there is a triangle (see \cite{BMRRT,IY}) of the form
\[
   T_j[-1] \to \bigoplus_i \beta_{i,j} T'_j \to \bigoplus_i \alpha_{i,j} T'_i
   \to T_j
\]
and we define the matrix $S=(s_{i,j})$ by setting $s_{i,j}=\alpha_{i,j}-\beta_{i,j}$. Then by \cite[Theorem 12~a)]{Palu} we have that

\begin{equation}\label{formula.Palu}
  M' = S M S^t.
\end{equation}

\subsection{Iyama-Yoshino's reduction}
\label{section.IY} 

 This subsection collects the results we are going to use from \cite{IY}, but restricted to the following setting: Let $\cT$ be a 2-CY triangulated category. Fix a functorially finite subcategory $\cD$ of $\cT$ satisfying $(\cD,\cD[1])=0$. Using $\cD$, we construct the subcategory $\cZ:=\lperp{\cD}[1]$ and the subfactor category $\cU:=\cZ/[\cD]$ of $\cT$. Then the category $\cU$ is triangulated (\cite[Theorem~4.2]{IY}) and $2$-CY (\cite[Theorem~4.9]{IY}). We now describe the shift functor $\shift{1}$ and the standard triangles in $\cU$.

For any $X\in \cZ$, fix a triangle
\[ X \extto{\alpha_X} D_X \extto{\beta_X} X\shift{1}\extto{\gamma_X} X[1] \]
where $D_X \in \cD$, $\alpha_X$ is a left $\cD$-approximation, and define $X\shift{1}$ by this (then $\beta_X$ automatically is a right $\cD$-approximation). The action of $\shift{1}$ on morphisms uses commutative diagrams of triangles like the one above (see \cite[Definition~2.5]{IY}).

Let $X\extto{a}Y \extto{b}Z \extto{c}X[1]$ be a triangle in $\cT$ with $X,Y,Z\in\cZ$. Since $\cT(Z[-1],D_X)=0$ holds, there is a commutative diagram
 \[\scalebox{1}{
 \begin{tikzpicture}[scale=2,yscale=-1]
 \node (X) at (1,1) {$X$};
 \node (Y) at (2,1) {$Y$};
 \node (Z) at (3,1) {$Z$};
 \node (X1) at (4,1) {$X[1]$};
 \node (XX) at (1,2) {$X$};
 \node (DX) at (2,2) {$D_X$};
 \node (Xs1) at (3,2) {$X\shift{1}$};
 \node (XX1) at (4,2) {$X[1]$};
 \draw [->] (X) -- node [auto] {$1_{X}$} (XX);
 \draw [->] (X) -- node [auto] {$a$} (Y);
 \draw [->] (Y) -- node [auto] {$b$} (Z);
 \draw [->] (Z) -- node [auto] {$c$} (X1);
 \draw [->] (XX) -- node [auto] {$\alpha_X$} (DX);
 \draw [->] (DX) -- node [auto] {$\beta_X$} (Xs1);
 \draw [->] (Xs1) -- node [auto] {$\gamma_{X}$} (XX1);
 \draw [->] (Y) -- (DX);
 \draw [->] (Z) -- node [auto] {$d$}(Xs1); 
 \draw [->] (X1) -- node [auto] {$1_{X[1]}$} (XX1); 
 \end{tikzpicture}}
 \]
Now the standard triangles in $\cU$ are the complexes of the form $X\extto{\oa}Y \extto{\ob}Z \extto{\od}X\shift{1}$.

\section{Mutating at oriented cycles}\label{section.mutation.cycles}

 In this section we develop the theory to mutate a cluster-tilting object in several summands (satisfying certain specific conditions) at the same time, in an algebraic $2$-CY triangulated category. To achieve this, we rely heavily on Palu's formula from Section~\ref{section.Palu} and the Iyama-Yoshino construction recalled in Section~\ref{section.IY}. 

\subsection{Exchanging several summands.}

 Throughout this section let $\cT$ be an algebraic $2$-CY triangulated category, and fix $T=T_m \oplus T_f$ a cluster-tilting object in $\cT$, where neither of the summands $T_m$ or $T_f$ is necessarily indecomposable. Let $\cD:= \add T_f$. Then clearly $(\cD,\cD[1])=0$. Define $\cZ$ to be the subcategory $\lperp{\cD}[1]$ and $\cU$ the $2$-CY subfactor category $\cZ/[\cD]$ of $\cT$. By \cite[Theorem 4.9]{IY} we have a one-to-one correspondence between cluster-tilting objects in $\cT$ having $T_f$ as a summand and cluster-tilting objects in $\cU$.

 The main purpose of this section is to mutate the summand $T_m$ of $T$, and leave the remaining part fixed, i.e.\ we want to replace $T_m$ by $T_m'$ in such a way that $T'=T_m' \oplus T_f$ is again a cluster-tilting object in $\cT$. 

In order to do this, we follow the construction explained in Section~\ref{section.IY}. Consider the following triangle in $\cT$
\[
   T_m \extto{a} D \extto{b} T_m\shift{1} \to T_m[1]
\] 
where $a$ (resp.\ $b$)  is a minimal left (resp.\ right) $\cD$-aproximation, $D\in\cD$, and $\shift{1}$ is the shift functor in $\cU$. The object $T_m\shift{1} \oplus T_f$ is cluster-tilting in $\cT$, since $T_m\shift{1}$ is cluster-tilting in $\cU$.

 One could also make the dual construction, by using the following triangle
\[
   T_m[-1] \to T_m \shift{-1} \extto{b'} D' \extto{a'} T_m
\] 
where $a'$ (resp.\ $b'$) is a minimal right (resp.\ left) $\cD$-aproximation and $D'\in\cD$. 

We want to construct the replacement $T_m'$ symmetrically, hence we require $T_m'=T_m\shift{1}$ and $T_m'=T_m\shift{-1}$. Thus we need $T_m\shift{1} \simeq T_m\shift{-1}$.

\begin{construction}\label{cons.mut_several_summands}
Using the notation as above, assume that $T_m\simeq
  T_m\shift{2}$ in $\cU$. Then we define the mutation at the summand $T_m$
  of the cluster-tilting object $T$ in $\cT$ to be 
\[\mu_{T_m}(T)=T_m\shift{1}\oplus T_f\]
\end{construction}

\begin{remark}
The assumption in the construction above is equivalent to requiring that the algebra $\cT(T_m,T_m)/[\cD]$ is self-injective (see \cite{Ringel}).
\end{remark}

\begin{example} \label{example.a9}
 Let $\cT$ be the cluster category of type $\bbA_9$ (This category will be called $\cA_{9,1}$ in Definition~\ref{def.2cy.cats}), and $T=\oplus_{i,j} T_{i_{j}}$ the cluster-tilting object in $\cT$ depicted in the figure below. We want to mutate $T$ at $T_{m}=\oplus_{j} T_{1_{j}}$. Observe that the hypothesis of Construction~\ref{cons.mut_several_summands} is satisfied, and we obtain $T_{m}\shift{1}\simeq\oplus_{j} T'_{1_{j}}$. This process is depicted in the figure below.
 \[
 \scalebox{1}{\begin{tikzpicture}[scale=0.6,yscale=-1]  
  \fill [fill1] (0,4.6) arc (-90:90:12pt);
  \fill [fill1] (12,5.4) arc (90:270:12pt);
 \foreach \x in {0,...,6}
  \foreach \y in {1,3,5,7,9}
   \node (\x-\y) at (\x*2,\y) [vertex] {};
 \foreach \x in {0,...,5}
  \foreach \y in {2,4,6,8}
   \node (\x-\y) at (\x*2+1,\y)  [vertex] {};
 \replacevertex{(0-1)} {[tvertex] {$T_{3_{1}}$}}
 \replacevertex{(2-1)} {[tvertex] {$T_{2_{1}}$}}
 \replacevertex[fill1]{(1-3)} {[tvertex] {$T_{1_{1}}$}}
  \replacevertex[fill1]{(3-7)} {[tvertex] {$T_{1_{2}}$}}
  \replacevertex{(2-9)} {[tvertex] {$T_{3_{2}}$}}
  \replacevertex{(4-9)} {[tvertex] {$T_{2_{2}}$}}
  \replacevertex[fill1]{(5-3)} {[tvertex] {$T_{1_{3}}$}}
  \replacevertex{(4-1)} {[tvertex] {$T_{3_{3}}$}}
  \replacevertex{(0-9)} {[tvertex] {$T_{2_{3}}$}}
 \replacevertex[fill1]{(1-7)} {[tvertex] {$T'_{1_{1}}$}}
 \replacevertex[fill1]{(3-3)} {[tvertex] {$T'_{1_{2}}$}}
 \replacevertex[fill1]{(5-7)} {[tvertex] {$T'_{1_{3}}$}}
 \replacevertex{(6-1)} {[tvertex] {$T_{2_{3}}$}}
 \replacevertex{(6-9)} {[tvertex] {$T_{3_{1}}$}}
  \replacevertex[fill1]{(2-5)} {[vertex] {}}
  \replacevertex[fill1]{(4-5)} {[vertex] {}}
 \foreach \xa/\xb in {0/1,1/2,2/3,3/4,4/5,5/6}
  \foreach \ya/\yb in {1/2,3/2,3/4,5/4,5/6,7/6,7/8,9/8}
   {
    \draw [->] (\xa-\ya) -- (\xa-\yb);
    \draw [->] (\xa-\yb) -- (\xb-\ya);
   }
\draw [dashed] (0,.5) -- (0,0.75); 
\draw [dashed] (0,1.25) -- (0,8.75); 
\draw [dashed] (0,9.25) -- (0,9.5); 
\draw [dashed] (12,.5) -- (12,0.75);
\draw [dashed] (12,1.25) -- (12,8.75);
\draw [dashed] (12,9.25) -- (12,9.5);
 \end{tikzpicture}}
\]
Here, the subfactor category $\cU$ of $\cT$, for $T_{f}=\oplus_{j} (T_{2_{j}}\oplus T_{3_{j}})$, is indicated in light grey, and is easily seen to be equivalent to the cluster category of $\bbA_3$. 
\end{example}

 We now present the setup under which we can mutate at cycles. 

\begin{setup}[for mutation at cycles] \label{setup.cycle} 
Let $\cT$ be an algebraic 2-CY triangulated category, and fix $T=T_m \oplus T_f$ a cluster-tilting object in $\cT$, where neither of the summands $T_m$ or $T_f$ is necessarily indecomposable. Let $\cU$ be the subfactor category of $\cT$ defined by 
\[
 \cU := \lperp{ (\add T_f [1]) }/[\add T_f].
\]
Let $T'=T_m' \oplus T_f$, where the summand $T_m' = T_m\shift{1}$ and the functor $\shift{1}$ is the shift in $\cU$. Furthermore, we have the following assumptions and notation:
\begin{enumerate}
\item No cluster-tilting object in $\cT$ has loops or 2-cycles.
\item Let $Q$ be the quiver of $(T,T)$. Denote by $Q_{m,m}$ the subquiver of $Q$ whose arrows correspond to maps from $T_m$ to $T_m$. Define $Q_{m,f}$, $Q_{f,m}$ and $Q_{f,f}$ likewise. Assume further that the quiver $Q_{m,m}$ is a cycle of length $l\ge 3$. 
\item Denote by $Q'$ the quiver of $(T',T')$ and define $Q'_{a,b}$ as above for all $a,b \in \{m,f\}$.  
\item The relations of $(T,T)$ are given by a potential.
\item The algebra $(T_m,T_m)/[D]$ is the path algebra of $Q_{m,m}$ with minimal relations given by the paths of length $l$.
\end{enumerate}
\end{setup}

 Under this setup, we want to mutate $Q$ at the cycle $Q_{m,m}$. In order to do this, we apply the formula from Equation~\ref{formula.Palu}. Using the same notation as in Section~\ref{section.Palu} we compute as follows. Write
\[
 M= \begin{pmatrix} A & B \\ C & D \end{pmatrix} -\begin{pmatrix} A &
   B \\ C & D \end{pmatrix}^t
  = \begin{pmatrix} A-A^t & B-C^t \\ C-B^t & D-D^t \end{pmatrix}
\] 
where $M_{m,m}:=A$ (resp.\ $M_{m,f}:=B, M_{f,m}:=C,M_{f,f}:=D$) is the matrix of the arrows in $Q_{m,m}$ (resp.\ $Q_{m,f}$,$Q_{f,m}$,$Q_{f,f}$).

  Observe that $A$ is an $l \times l$-matrix that looks like
\[
 A = \begin{pmatrix} 
  0  & 1 & 0       & \cdots &        & 0 \\
     & 0 & 1       & 0      & \cdots & 0 \\
 \vdots    &   & \ddots  & \ddots & \ddots & \vdots \\
     &   &         &        &  \ddots      & 0  \\
   0 &   & \cdots  &        &    0   & 1 \\
   1 & 0 &         & \cdots &        & 0
\end{pmatrix},
\] 
a cyclic permutation matrix. To simplify the notation, we denote all the arrows of $Q_{m,m}$ by $\alpha$, and set $\alpha^i$ to be the composition of $i$ such arrows. We decompose $C= \sum_{i=0}^{l-2} C_i$, where $C_i$ corresponds to the set of arrows 

\begin{align*} 
C_i= \{ \gamma \in Q_{f,m} \, | & \, \gamma \alpha^{i+1} \text{ factors through an arrow in } Q_{f,f}, \\
& \, \gamma \alpha^i \text{ does not factor through an arrow in } Q_{f,f} \}.
\end{align*} 

Dually, we decompose $B = \sum_{i=0}^{l-2} B_i$.  Now, using the fact that the relations come from a potential, we have the following equalities.
\begin{equation}
  C_i^t= A^{i+1} B_i \mbox{ for all } 0 \le i \le l-3.
\end{equation}

 Observe that the matrix $S$ (see Subsection~\ref{section.Palu}) can be written as 
\[ 
S = \begin{pmatrix} 
   -1 & 0 \\ 
   \Sum_{i=0}^{l-2}C_i \Sum_{j=0}^{i}A^j & 1 
\end{pmatrix}.  
\]

 In order to get a more symmetric result, we twist $S$ with the permutation matrix $\left(\begin{smallmatrix} A^{-1} & 0 \\ 0 & 1 \end{smallmatrix}\right)$, and thus we calculate
\[
 M'= \begin{pmatrix} -A^{-1} & 0 \\ \Sum_{i=0}^{l-2}C_i \Sum_{j=0}^{i}A^j & 1 \end{pmatrix}
     \begin{pmatrix} A-A^t & B-C^t \\ C-B^t & D-D^t \end{pmatrix}
     \begin{pmatrix} -A^{-1} & 0 \\ \Sum_{i=0}^{l-2}{C_i} \Sum_{j=0}^{i}A^j & 1 \end{pmatrix}^t.
\]

Then we can prove the following result.
\begin{theorem}[Mutation rule for cycles]
\label{rule.mut_min_cycle}
Suppose that we are in the situation of Setup~\ref{setup.cycle}. Then mutation at $Q_{m,m}$ has the following effect on the quiver:
\begin{enumerate}
\item Arrows in $Q_{m,m}$ remain unchanged.
\item Arrows in $Q_{f,m}$: Any arrow $\gamma$ in $C_i$, with $1\le i \le l-3$ remains unchanged. For an arrow $\gamma$ in $C_{l-2}$, we consider the path $\gamma \alpha^{l-1}$ going from $a$ to $b$. Then replace $\gamma$ by an arrow $[\gamma\alpha^{l-1}]^t$ going from $b$ to $a$. 
\item Arrows in $Q_{m,f}$: Apply the dual process for the arrows in $Q_{m,f}$.
\item Arrows in $Q_{f,f}$ remain unchanged. 
\item Furthermore, add an arrow $[\gamma \alpha^i \beta]$ for each composition $\gamma \alpha^i \beta$ where $\gamma\in Q_{f,m}$, $\alpha \in Q_{m,m}$, $\beta\in Q_{m,f}$ such that
\begin{enumerate}
  \item neither $\gamma \alpha^i$ nor $\alpha^i \beta$ factor through an arrow in $Q_{f,f}$,
  \item  $\gamma \in C_{l-2}$ or $\beta \in B_{l-2}$, i.e.\ at least one of $\gamma$ or $\beta$ has no extra relations with the cycle of $Q_{m,m}$.
\end{enumerate}
\item Finally, remove any loops or 2-cycles from the mutated quiver.
\end{enumerate}
\end{theorem}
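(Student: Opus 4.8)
The plan is to compute the skew-symmetric matrix $M'$ of $Q'$ by means of Palu's formula~\eqref{formula.Palu}, in the twisted form $M'=\widetilde{S}M\widetilde{S}^{t}$ already set up before the statement, and then read off its four blocks. The organizing observation is that, by assumption~(1), neither $Q$ nor $Q'$ has loops or $2$-cycles, so $M$ and $M'$ each record, between any ordered pair of vertices, the \emph{exact} number of arrows in a single direction; in particular $M'$ determines $Q'$ up to isomorphism. It therefore suffices to show that the quiver assembled by the recipe (a)--(e) has $M'$ as its net-arrow-count matrix, step~(f) being nothing but the passage from such a matrix back to the unique loop- and $2$-cycle-free quiver it represents. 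Moreover, since $O:=\left(\begin{smallmatrix} A^{-1} & 0\\ 0 & 1\end{smallmatrix}\right)$ is a permutation matrix, replacing $S$ by $\widetilde{S}:=OS$ and computing $\widetilde{S}M\widetilde{S}^{t}=O(SMS^{t})O^{t}$ only relabels the vertices of $Q_{m,m}$ cyclically, so working with the twisted $\widetilde{S}$ costs nothing up to isomorphism.

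Three algebraic facts drive the computation. Since $A$ is the cyclic permutation matrix of the $l$-cycle $Q_{m,m}$, one has $A^{l}=I$ and $A^{t}=A^{-1}$; assumption~(5) pins down $(T_m,T_m)/[D]$ as the path algebra of $Q_{m,m}$ modulo the length-$l$ paths, which is what makes the survival-index decompositions $C=\Sum_{i=0}^{l-2}C_i$ and $B=\Sum_{i=0}^{l-2}B_i$ well defined; and assumption~(4) yields the identities $C_i^{t}=A^{i+1}B_i$ for $0\le i\le l-3$ but \emph{not} for $i=l-2$. This single asymmetry is the source of everything special about $C_{l-2}$ and $B_{l-2}$. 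Writing $\widetilde{S}^{t}=\left(\begin{smallmatrix} -A & X^{t}\\ 0 & 1\end{smallmatrix}\right)$ with $X=\Sum_{i=0}^{l-2}C_i\Sum_{j=0}^{i}A^{j}$ and multiplying out, the $(m,m)$-block is immediate: it equals $A^{-1}(A-A^{t})A=A-A^{t}$ by $A^{t}=A^{-1}$, so $Q_{m,m}$ is untouched, which is (a).

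For the off-diagonal blocks I would expand $M'_{f,m}=-\big(X(A-A^{t})+(C-B^{t})\big)A$ and $M'_{m,f}=-A^{-1}(A-A^{t})X^{t}-A^{-1}(B-C^{t})$, substitute $C_i^{t}=A^{i+1}B_i$ wherever $i\le l-3$, and reduce powers of $A$ using $A^{l}=I$. The contributions with $1\le i\le l-3$ leave the arrows of $C_i$ (resp.\ $B_i$) unchanged; the $C_{l-2}$-part, having no relation available to absorb it, survives with opposite sign and precomposed by $A^{l-1}$. Reading the sign as a reversed direction and the factor $A^{l-1}$ as the path $\alpha^{l-1}$ gives exactly ``delete $\gamma$ and insert $[\gamma\alpha^{l-1}]^{t}\colon b\to a$'', which is (b); the dual reading of $M'_{m,f}$ gives (c). The boundary indices $i=0$ and $i=l-2$, where the relations degenerate, have to be handled separately.

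The $(f,f)$-block $M'_{f,f}=X(A-A^{t})X^{t}+(C-B^{t})X^{t}+X(B-C^{t})+(D-D^{t})$ is where the real work lies and is the main obstacle. The summand $D-D^{t}$ restores the old $Q_{f,f}$-arrows, which is (d). Expanding the other three summands yields a double sum over pairs $\gamma\in C_j$, $\beta\in B_{j'}$ separated by a power $A^{i}$, i.e.\ over compositions $\gamma\alpha^{i}\beta$; feeding in the relations $C_i^{t}=A^{i+1}B_i$ cancels these terms in pairs. The only survivors are the compositions for which, on one hand, $i$ lies in the common survival range of $\gamma$ and $\beta$, so that neither $\gamma\alpha^{i}$ nor $\alpha^{i}\beta$ factors through $Q_{f,f}$ (this is (e)(i)), and, on the other hand, the cancelling relation is unavailable, which by the index bound $0\le i\le l-3$ forces $\gamma\in C_{l-2}$ or $\beta\in B_{l-2}$ (this is (e)(ii), a direct transcription of the failure of $C_i^{t}=A^{i+1}B_i$ at $i=l-2$). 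With all four blocks identified, $M'$ equals the net-arrow-count matrix of the quiver built by (a)--(e), and (f) returns the unique loop- and $2$-cycle-free quiver it represents, which by assumption~(1) is $Q'$. The genuinely delicate part, which is why the explicit manipulations are deferred to Appendix~\ref{appendix.ugly}, is the bookkeeping in this last block: one must verify that every term produced by $X(A-A^{t})X^{t}$, $(C-B^{t})X^{t}$ and $X(B-C^{t})$ either cancels against another, is absorbed into $D-D^{t}$, or matches a composition obeying both clauses of (e), with nothing left over, using the length-$l$ relation to annihilate every putative contribution of $\alpha^{l}$.
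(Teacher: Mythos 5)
Your proposal is correct and takes essentially the same route as the paper's own proof in Appendix~\ref{appendix.ugly}: Palu's formula twisted by the permutation matrix $\bigl(\begin{smallmatrix} A^{-1} & 0\\ 0 & 1\end{smallmatrix}\bigr)$, a block-by-block expansion of $\widetilde{S}M\widetilde{S}^{t}$ using $A^{t}=A^{-1}$, $A^{l}=1$ and the potential identities $C_i^{t}=A^{i+1}B_i$ for $0\le i\le l-3$, with the failure of that identity at $i=l-2$ accounting precisely for the exceptional behaviour in parts (b), (c) and (e). The paper's appendix merely executes in full the cancellations you describe (its four-step treatment of the $(f,f)$-block matches your account of which terms vanish and which survive), so the only difference is that the mechanical bookkeeping is carried out there rather than sketched.
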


\begin{proof}
The calculations are straight-forward but somewhat lengthy. See Appendix~\ref{appendix.ugly}.
\end{proof}

\begin{example} \label{example.quivers.a9}
 Let $T$ in $\cT$ be as in Example~\ref{example.a9}. We depict the quiver $Q$ of $(T,T)$ to the left in the diagram below:
\[
\scalebox{1}{ \begin{tikzpicture}[scale=0.85,yscale=-1]
 \node at (4,7.464) {$Q$};
 \node (M1) at (3,3)  {$1_{1}$};
 \node (M2) at (5,3)  {$1_{2}$};
 \node (M3) at (4,4.732)  {$1_{3}$};
 \node (F13) at (3,6.464)  {$2_{3}$};
 \node (F23) at (5,6.464)  {$3_{3}$};
 \node (F12) at (7,3)  {$2_{2}$};
 \node (F22) at (6,1.276)  {$3_{2}$};
 \node (F11) at (2,1.276)  {$2_{1}$};
 \node (F21) at (1,3)  {$3_{1}$};
 \draw [->] (M1) -- node [auto] {$\alpha_{1}$} (M2);
 \draw [->] (M2) -- node [auto] {$\alpha_{2}$} (M3);
 \draw [->] (M3) -- node [auto] {$\alpha_{3}$} (M1);
 \draw [->] (M1) -- node [auto,swap] {$\beta_{1}$} (F11);
 \draw [->] (F11) -- node [auto,swap] {$\gamma_{1}$} (F21);
 \draw [->] (F21) -- node [auto,swap] {$\delta_{1}$} (M1);
 \draw [->] (M2) -- node [auto,swap] {$\beta_{2}$} (F12);
 \draw [->] (F12) -- node [auto,swap] {$\gamma_{2}$} (F22);
 \draw [->] (F22) -- node [auto,swap] {$\delta_{2}$} (M2);
 \draw [->] (M3) -- node [auto,swap] {$\beta_{3}$} (F13);
 \draw [->] (F13) -- node [auto,swap] {$\gamma_{3}$} (F23);
 \draw [->] (F23) -- node [auto,swap] {$\delta_{3}$} (M3);
 \draw [leadsto] (8,4.732) -- (10,4.732);
 \node at (9,4) {$\mu_{ \left\{1_1,1_2,1_3\right\} }$};
  \pgftransformshift{\pgfpoint{10cm}{0cm}};
 \node at (4,7.464) {$Q'$};
   \node (M1) at (3,3)  {$1_{1}$};
 \node (M2) at (5,3)  {$1_{2}$};
 \node (M3) at (4,4.732)  {$1_{3}$};
 \node (F13) at (3,6.464)  {$3_{1}$};
 \node (F23) at (5,6.464)  {$2_{2}$};
 \node (F12) at (7,3)  {$3_{3}$};
 \node (F22) at (6,1.276)  {$2_{1}$};
 \node (F11) at (2,1.276)  {$3_{2}$};
 \node (F21) at (1,3)  {$2_{3}$};
 \draw [->] (M1) -- node [auto] {$\alpha_{1}$} (M2);
 \draw [->] (M2) -- node [auto] {$\alpha_{2}$} (M3);
 \draw [->] (M3) -- node [auto] {$\alpha_{3}$} (M1);
 \draw [->] (M1) -- node [auto,swap] {$[\delta\alpha^2]^t$} (F11);
 \draw [->] (F11) -- node [auto,swap] {$[\delta\alpha\beta]$} (F21);
 \draw [->] (F21) -- node [auto,swap] {$[\alpha^2\beta]^t$} (M1);
 \draw [->] (M2) -- node [auto,swap] {$[\delta\alpha^2]^t$} (F12);
 \draw [->] (F12) -- node [auto,swap] {$[\delta\alpha\beta]$} (F22);
 \draw [->] (F22) -- node [auto,swap] {$[\alpha^2\beta]^t$} (M2);
 \draw [->] (M3) -- node [auto,swap] {$[\delta\alpha^2]^t$} (F13);
 \draw [->] (F13) -- node [auto,swap] {$[\delta\alpha\beta]$} (F23);
 \draw [->] (F23) -- node [auto,swap] {$[\alpha^2\beta]^t$} (M3);
 \end{tikzpicture}}
\]
where the potential is given by the sum of all the minimal cycles. It is clear that Setup~\ref{setup.cycle} is satisfied. We want to mutate this quiver at the minimal cycle spanned by the set of vertices $\{1_{1},1_{2},1_{3}\}$. Using the same notation as in the mutation rule, we observe that we have decompositions $C=C_{0}+C_{1}$ and $B=B_{0}+B_{1}$ since the length of the minimal $\alpha$-cycle is $3$. It is easy to see that $C_{0}$ and $B_{0}$ are empty, $B_{1}=\{\beta_{i} \, | \, i=1,2,3 \}$ and $C_{1}=\{\delta_{i} \, | \, i=1,2,3 \}$.  We are ready to apply the rule:
\begin{enumerate}
 \item{$Q'_{m,m}$.} The $\alpha$-arrows stay the same.
 \item{$Q'_{m,f}$.} For each path $\alpha^2\beta$ we add an arrow $[\alpha^2\beta]^t$ going in the opposite direction.
 \item{$Q'_{f,m}$.} For each path $\delta \alpha^2$ we add an arrow $[\delta\alpha^2]^t$ going in the opposite direction.
 \item{$Q'_{f,f}$.} The $\gamma$-arrows stay the same. 
 \item Furthermore, we add all the compositions $[\delta \beta]$ and $[\delta \alpha \beta]$. Then we end up with a quiver like in the figure shown below. Here we have indicated in which step the arrows have been added.

\[
\scalebox{1}{ \begin{tikzpicture}[scale=0.85,yscale=-1]
 \node (M1) at (3,3)  {$1_{1}$};
 \node (M2) at (5,3)  {$1_{2}$};
 \node (M3) at (4,4.732)  {$1_{3}$};
 \node (F13) at (3,6.464)  {$2_{3}$};
 \node (F23) at (5,6.464)  {$3_{3}$};
 \node (F12) at (7,3)  {$2_{2}$};
 \node (F22) at (6,1.276)  {$3_{2}$};
 \node (F11) at (2,1.276)  {$2_{1}$};
 \node (F21) at (1,3)  {$3_{1}$};
 \draw [->] (M1) -- node [WhiteBK,pos=0.5] {\tiny (a)} (M2);
 \draw [->] (M2) -- node [WhiteBK,pos=0.5] {\tiny (a)} (M3);
 \draw [->] (M3) -- node [WhiteBK,pos=0.5] {\tiny (a)} (M1);
 \draw [->] (F11.235) -- node [right] {\tiny (d)} (F21.60);
 \draw [<-] (F11.220) -- node [left] {\tiny (e)} (F21.80);
 \draw [<-] (F12.125) -- node [right] {\tiny (e)} (F22.300);
 \draw [->] (F12.140) -- node [left] {\tiny (d)} (F22.280);
 \draw [->] (F13.10) -- node [above] {\tiny (d)} (F23.170);
 \draw [<-] (F13.350) -- node [below] {\tiny (e)} (F23.190);
 \draw [->] (F13) -- node [WhiteBK,pos=0.5] {\tiny (b)} (M1);
 \draw [->] (F11) -- node [WhiteBK,pos=0.5] {\tiny (b)} (M2);
 \draw [->] (F12) -- node [WhiteBK,pos=0.5] {\tiny (b)} (M3);
 \draw [->] (M3) -- node [WhiteBK,pos=0.5] {\tiny (c)} (F21);
 \draw [->] (M2) -- node [WhiteBK,pos=0.5] {\tiny (c)} (F23);
 \draw [->] (M1) -- node [WhiteBK,pos=0.5] {\tiny (c)} (F22);
 \draw [->,bend left=90,looseness=1.2] (F21) to node [WhiteBK,pos=0.5] {\tiny (e)} (F12);
 \draw [->,bend left=90,looseness=1.2] (F22) to node [WhiteBK,pos=0.5] {\tiny (e)} (F13);
 \draw [->,bend left=90,looseness=1.2] (F23) to node [WhiteBK,pos=0.5] {\tiny (e)} (F11);
 \end{tikzpicture}}
\]

 \item Now eliminate all loops and $2$-cycles.
\end{enumerate}

Then one obtains the quiver $Q'$ to the right in the figure at the beginning of the example, which is the quiver of the endomorphism ring of $\mu_{\{1_{1},1_{2},1_{3}\}}(T)$ as one can also see from the AR-quiver of $\cT$ in Example~\ref{example.a9}. 
\end{example}

 \begin{example} Consider the canonical cluster-tilting object $T$ from the stable module category of the preprojective algebra  of $A_6$. The quiver $Q$ of $(T,T)$ is depicted to the left in the figure below.
 \[ \scalebox{1}{
 \begin{tikzpicture}[scale=0.6]
   \node at (0,-4) {$Q$};
   \node (5-1) at (-1.3,-0.75) {$5_1$};
   \node (5-2) at (0,1.5) {$5_2$};
   \node (5-3) at (1.3,-0.75) {$5_3$};
   \node (3-1) at (0,-3) {$3_1$};
   \node (3-2) at (-2.6,1.5) {$3_2$};
   \node (3-3) at (2.6,1.5) {$3_3$};
   \node (4-1) at (-2.6,-3) {$4_1$};
   \node (4-2) at (-1.3,3.75) {$4_2$};
   \node (4-3) at (3.9,-0.75) {$4_3$};
   \node (2-1) at (-3.9,-0.75) {$2_1$};
   \node (2-2) at (1.3,3.75) {$2_2$};
   \node (2-3) at (2.6,-3) {$2_3$};
   \node (1-1) at (-5.2,-3) {$1_1$};
   \node (1-2) at (0,6) {$1_2$};
   \node (1-3) at (5.2,-3) {$1_3$};
   \foreach \y in {1,2,3} {
    \draw [->] (5-\y)--(3-\y);
    \draw [->] (3-\y)--(4-\y);
    \draw [->] (4-\y)--(5-\y);
    \draw [->] (5-\y)--(2-\y);   
    \draw [->] (2-\y)--(4-\y);
    \draw [->] (4-\y)--(1-\y);
    \draw [->] (1-\y)--(2-\y);
    }
   \foreach \x/\y in {2/1,3/2,1/3}{
    \draw [->] (2-\y)--(3-\x);
    \draw [->] (3-\x)--(5-\y);
    \draw [->] (5-\y)--(5-\x);
   }
 \node at (6,1.5) {$\mu_{\left\{5_1,5_2,5_3\right\}}$};
 \draw [leadsto] (5,1) -- (7,1);
 \end{tikzpicture}
\quad \quad
 \begin{tikzpicture}[scale=0.7]
   \node at (0, -4.97) {$Q'$};
   \node (5-1) at (-1.3,-0.75) {$5_1$};
   \node (5-2) at (0,1.5) {$5_2$};
   \node (5-3) at (1.3,-0.75) {$5_3$};
   \node (3-1) at (0,-3) {$3_1$};
   \node (3-2) at (-2.6,1.5) {$3_2$};
   \node (3-3) at (2.6,1.5) {$3_3$};
   \node (4-1) at (-2.6,-3) {$4_2$};
   \node (4-2) at (-1.3,3.75) {$4_3$};
   \node (4-3) at (3.9,-0.75) {$4_1$};
   \node (2-1) at (-3.9,-0.75) {$2_3$}; 
   \node (2-2) at (1.3,3.75) {$2_1$}; 
   \node (2-3) at (2.6,-3) {$2_2$}; 
   \node (1-1) at (-3.44,1.98) {$1_3$};
   \node (1-2) at (3.44,1.98) {$1_1$}; 
   \node (1-3) at (0,-3.97) {$1_2$}; 
   \foreach \y in {1,2,3} {
    \draw [->] (5-\y)--(3-\y);
    \draw [->] (5-\y)--(4-\y);
    \draw [->] (2-\y)--(5-\y);   
    \draw [->] (4-\y)--(2-\y);
    \draw [->] (1-\y)--(2-\y);
    }
   \foreach \x/\y in {2/1,3/2,1/3}{
    \draw [->] (3-\x)--(5-\y);
    \draw [->] (4-\x)--(1-\y);
    \draw [->] (5-\y)--(5-\x);
   }
 \end{tikzpicture}}
 \]
where the potential is given by the sum of all the minimal triangles. It is not hard to see that Setup~\ref{setup.cycle} is satisfied (to check the no loops and 2-cycles condition see \cite{GLS,BIRSc}). Let $Q_{m,m}=\{5\to 5\}$, $Q_{f,m}=\{3\to 5, 4\to 5\}$, $Q_{m,f}=\{5\to 2, 5\to 3\}$ and $Q_{f,f}$ the rest. We want to mutate at the minimal cycle of length $3$ given by $Q_{m,m}$. Decompose $C=C_0+C_1$ where $C_0=\{3\to 5\}$ and $C_1=\{4\to 5\}$. Similarly, $B = B_0+B_1$ where $B_0=\{5\to 3\}$ and $B_1=\{5\to 2\}$. We apply the mutation rule for cycles: 
\begin{enumerate}
 \item{$5\to 5$.} These arrows stay the same.
 \item{$5\to \{3,2\}$.} For each path $5\to 5\to 2$ we add an arrow $[2\to 5]$. For the paths $5\to 5\to 3$, we do not add arrows $[3\to 5]$ since $5\to 5\to 3 = 5\to 2 \to 3$, factoring through the arrows $2\to 3$ in $Q_{f,f}$.
 \item{$\{4,3\} \to 5$.} For each path $4\to 5 \to 5$ we add an arrow $[5\to 4]$. For the paths $3\to 5\to 5$, we do not add arrows $[5\to 3]$ since $3\to 5\to 5 = 3\to 4 \to 5$, factoring through the arrows $3\to 4$ in $Q_{f,f}$.
 \item The remaining arrows stay the same. 
 \item Furthermore, we add arrows for all the compositions $[3\to 5 \to 2]$, $[4\to 5 \to 3]$, $[4\to 5 \to 2]$ and $[4\to 5\to 5\to 2]$. 
 \item Now eliminate all loops and $2$-cycles.
\end{enumerate}
 Then one obtains the quiver $Q'$ to the right in the figure above. 
 \end{example}

\subsection{Mutation rule at cycles vs.\ FZ-mutation}

In both examples above one can check that the result of mutating in a cycle can also be obtained by a sequence of FZ-mutations in the vertices of the cycle (using vertices multiple times, not just every vertex once). In this section we will show that this is no coincidence, but that there always exists a sequence of FZ-mutations that corresponds to our mutation rule at cycles.

 Let the cluster-tilting objects $T=T_m\oplus T_f$ and $T'=T_m\shift{1}\oplus T_f$ in $\cT$, the category $\cU$, and the quivers $Q$ and $Q'$ be as in Setup~\ref{setup.cycle}. Assume the conditions of Setup~\ref{setup.cycle} hold. Using the same notation, we observe that the cluster-tilting object $T_m$ in $\cU$ has  $\Gamma:=(T_m,T_m)/[\add T_f]$ as endomorphism ring, which is a cluster-tilted algebra of type $\bbD_l$. It is not hard to find a sequence of mutations taking us from $Q_\Gamma$, the quiver of $\Gamma$, to a hereditary quiver $Q_H$. Then, by using \cite[Main Theorem]{KR}, we see that the 2-CY category $\cU$ is triangle equivalent to the cluster category of $\bbD_l$. Using the methods developed in \cite{BOW1}, we observe that we can choose $T_m$ as indicated in dark gray in the Auslander-Reiten quiver of $\cU$ (where $l$ is assumed to be odd) shown below. Here $T_m\shift{1}$ is indicated in lighter gray.
\[ \scalebox{1}{ 
\begin{tikzpicture}[scale=0.6,yscale=-1]
 \foreach \x in {2,4,6}
  \fill [fill1] (\x*2,1) circle (.4);
 \foreach \x in {1,3,5}
  \fill [fill1] (\x*2,2) circle (.4);
 \fill [fill1] (0,.6) arc (-90:90:.4);
 \fill [fill1] (14,2.4) arc (90:270:.4);
 \foreach \x in {2,4,6}
  \fill [fill2] (\x*2,2) circle (.4);
 \foreach \x in {1,3,5}
  \fill [fill2] (\x*2,1) circle (.4);
 \fill [fill2] (0,1.6) arc (-90:90:.4);
 \fill [fill2] (14,1.4) arc (90:270:.4);
 \foreach \x in {0,...,7}
  \foreach \y in {1,2,4,8}
   \node (\y-\x) at (\x*2,\y) [vertex] {};
 \foreach \x in {0,...,6}
  \foreach \y in {3,5,7}
   \node (\y-\x) at (\x*2+1,\y) [vertex] {};
 \replacevertex{(3-3)}{{$\cdots$}}
 \replacevertex{(5-3)}{{$\cdots$}}
 \replacevertex{(7-3)}{{$\cdots$}}
 \foreach \xa/\xb in {0/1,1/2,2/3,3/4,4/5,5/6,6/7}
  \foreach \ya/\yb in {1/3,2/3,4/3,4/5,8/7}
   {
    \draw [->] (\ya-\xa) -- (\yb-\xa);
    \draw [->] (\yb-\xa) -- (\ya-\xb);
   }
 \foreach \x in {1,2,3,4,5,6,8,9,10,11,12,13}
  \node at (\x,5.8) {$\vdots$};
 \draw [dashed] (0,.5) -- (1-0.north);
 \draw [dashed] (1-0.south) -- (0,8.5); 
 \draw [dashed] (14,.5) -- (2-7.north);
 \draw [dashed] (2-7.south) -- (14,8.5);
\end{tikzpicture} } \]
Since $\cU$ is mutation connected, we can always find a sequence of (cluster-tilting) mutations taking us from $T_m$ to $T_m\shift{1}$. Applying this sequence of mutations in $\cT$, we obtain a sequence of (cluster-tilting) mutations taking us from $T$ to $T'$. Since $\cT$ has no loops or 2-cycles by assumption, cluster-tilting mutation corresponds to FZ-mutation at the level of quivers (see \cite{BIRSc}). Therefore, applying the corresponding sequence of FZ-mutations to the quiver $Q$, we obtain the quiver $Q'$. Hence we have proved the following result.

\begin{theorem}\label{thm.mut_cycles.FZmut} Let $T=T_m \oplus T_f$ be a cluster tilting object in $\cT$, such that the conditions of Setup~\ref{setup.cycle} hold. Let $T'=T_m\shift{1}\oplus T_f$ be the cluster-tilting object obtained after mutating $T$ at $T_m$ as in Construction~\ref{cons.mut_several_summands}. Denote by $Q$ (resp.\ $Q'$) the quiver of the 2-CY tilted algebra $(T,T)$ (resp.\ $(T',T')$). Let $Q_{m,m}$ denote the quiver of $(T_m,T_m)$. Then there exists a sequence of FZ-mutations taking us from $Q$ to $Q'$, which correspond to mutating at $Q_{m,m}$ as in Theorem~\ref{rule.mut_min_cycle}.
\end{theorem}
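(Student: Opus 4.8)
The plan is to transport the whole problem into the Iyama--Yoshino subfactor $\cU = \lperp{(\add T_f[1])}/[\add T_f]$, in which the passage $T_m \rightsquigarrow T_m\shift{1}$ is merely a change of cluster-tilting object, and then to realize that change as a chain of single-summand mutations which lift to FZ-mutations in $\cT$.

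First I would identify $\cU$ with a cluster category of type $\bbD_l$. By Setup~\ref{setup.cycle}(2),(5), the cluster-tilting object $T_m$ of $\cU$ has endomorphism ring $\Gamma = (T_m,T_m)/[\add T_f]$ whose quiver is the $l$-cycle $Q_{m,m}$ with relations the paths of length $l$, a standard presentation of a cluster-tilted algebra of type $\bbD_l$. Hence $Q_{m,m}$ is mutation-equivalent to the acyclic Dynkin quiver of type $\bbD_l$; mutating $T_m$ in $\cU$ along such a sequence (legitimate since by Setup~\ref{setup.cycle}(4) the relations come from a potential, so the quivers are tracked by \cite{DWZ} QP-mutation) produces a cluster-tilting object $T_H$ with acyclic quiver $Q_H$. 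On an acyclic quiver the potential is trivial, so $\End_\cU(T_H) = kQ_H$ is hereditary, and \cite[Main Theorem]{KR} yields a triangle equivalence $\cU \simeq \cC_{\bbD_l}$ sending the Iyama--Yoshino shift $\shift{1}$ to the shift functor $[1]$ of $\cC_{\bbD_l}$.

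Working now in $\cC_{\bbD_l}$, whose AR-quiver is explicit, I would use the methods of \cite{BOW1} to locate $T_m$ and its shift $T_m\shift{1}$, the hypothesis $T_m\simeq T_m\shift{2}$ of Construction~\ref{cons.mut_several_summands} fixing their relative (symmetric) position. Since $\cC_{\bbD_l}$ is of finite type, and hence mutation-connected, there is a finite chain $T_m = U_0, U_1, \dots, U_N = T_m\shift{1}$ of single-summand cluster-tilting mutations in $\cU$. By the correspondence \cite[Theorem 4.9]{IY} and the compatibility of the standard triangles of $\cU$ with those of $\cT$ recalled in Section~\ref{section.IY}, each step lifts to a single-summand mutation in $\cT$, producing a chain $U_0\oplus T_f, \dots, U_N\oplus T_f$ from $T$ to $T'$. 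By Setup~\ref{setup.cycle}(1) none of these cluster-tilting objects has loops or 2-cycles, so by \cite{BIRSc} every step is an FZ-mutation on quivers, and the composite is an FZ-sequence from $Q$ to $Q'$ touching only the vertices of $Q_{m,m}$. Since this sequence and the rule of Theorem~\ref{rule.mut_min_cycle} both compute the quiver of $(T',T')$, they correspond, which is the assertion.

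I expect the crux to be the explicit bookkeeping in the second and third steps: confirming that $Q_{m,m}$ with its length-$l$ relations is genuinely a $\bbD_l$ cluster-tilted algebra, and, after transporting along \cite{KR}, pinning down the positions of $T_m$ and $T_m\shift{1}$ in the AR-quiver of $\cC_{\bbD_l}$ (the figure is drawn for odd $l$) precisely enough to guarantee the connecting chain $U_0,\dots,U_N$ and to see that $\shift{1}$ acts as claimed. The reduction via \cite{KR} and the cluster-tilting/FZ dictionary of \cite{BIRSc} are then routine; the genuinely delicate point is making \cite{BOW1} control how the Iyama--Yoshino shift sits inside the AR-quiver of the $\bbD_l$ cluster category.
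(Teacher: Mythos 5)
Your proposal is correct and follows essentially the same route as the paper's own proof: identify $\Gamma=(T_m,T_m)/[\add T_f]$ as a cluster-tilted algebra of type $\bbD_l$, pass to the cluster category of $\bbD_l$ via \cite{KR}, locate $T_m$ and $T_m\shift{1}$ in its AR-quiver using \cite{BOW1}, connect them by a finite chain of single-summand cluster-tilting mutations, lift that chain to $\cT$ via the Iyama--Yoshino correspondence, and use the no-loops/no-2-cycles hypothesis together with \cite{BIRSc} to identify each step with an FZ-mutation. The only differences are minor elaborations (justifying heredity of the intermediate endomorphism ring via the trivial potential on an acyclic quiver), not a different argument.
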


\begin{corollary}
Mutating in cycles in algebraic 2-CY triangulated categories, the cluster tilting objects remain in the same mutation component.
\end{corollary}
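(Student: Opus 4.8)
The plan is to read the corollary off directly from the \emph{proof} of Theorem~\ref{thm.mut_cycles.FZmut}, rather than from its statement. Recall that the mutation component of a cluster-tilting object is its connected component in the exchange graph whose vertices are the cluster-tilting objects of $\cT$ and whose edges are single-summand (Iyama-Yoshino) mutations. Since $\mu_{T_m}(T)=T_m\shift{1}\oplus T_f$ is obtained from $T=T_m\oplus T_f$ by a \emph{simultaneous} exchange of several summands (Construction~\ref{cons.mut_several_summands}), it is not a priori clear that it lies in the same component as $T$; so what I must do is exhibit a finite sequence of ordinary single-summand mutations connecting the two.

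First I would unwind the argument already used to establish Theorem~\ref{thm.mut_cycles.FZmut}. There, after identifying the subfactor category $\cU$ with the cluster category of type $\bbD_l$ via \cite{KR}, one invokes the mutation-connectivity of that (Dynkin type) cluster category to produce a finite sequence of single-summand cluster-tilting mutations inside $\cU$ carrying $T_m$ to $T_m\shift{1}$. This sequence is precisely the input needed for the corollary.

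Next I would lift this sequence from $\cU$ back up to $\cT$. By the Iyama-Yoshino correspondence \cite[Theorem~4.9]{IY}, the cluster-tilting objects of $\cT$ having $T_f$ as a summand are in bijection with the cluster-tilting objects of $\cU$, and under this bijection a single-summand exchange in $\cU$ corresponds to a single-summand exchange in $\cT$ that leaves $T_f$ untouched. Applying the correspondence step by step to the sequence found in $\cU$ therefore yields a sequence of single-summand cluster-tilting mutations in $\cT$, each passing through a cluster-tilting object containing $T_f$ as a summand, and carrying $T=T_m\oplus T_f$ to $T_m\shift{1}\oplus T_f=\mu_{T_m}(T)$. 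By the definition of a mutation component, this sequence places $T$ and $\mu_{T_m}(T)$ in the same component, which is the assertion of the corollary.

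The only point that requires care—and this is the sole potential obstacle—is verifying that the single-summand mutations performed inside $\cU$ genuinely pull back to \emph{single-summand} mutations in $\cT$, rather than to some larger simultaneous exchange; this is exactly the compatibility of Iyama-Yoshino reduction with mutation furnished by \cite[Theorem~4.9]{IY}, so no additional work beyond Theorem~\ref{thm.mut_cycles.FZmut} is needed.
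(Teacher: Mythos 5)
Your proposal is correct and is essentially the paper's own argument: the corollary is read off not from the statement of Theorem~\ref{thm.mut_cycles.FZmut} but from its proof, where mutation-connectivity of $\cU$ (identified with the cluster category of type $\bbD_l$ via \cite{KR}) yields a sequence of single-summand mutations from $T_m$ to $T_m\shift{1}$, which is then transported to $\cT$ by the Iyama-Yoshino correspondence to give a path of cluster-tilting mutations from $T$ to $T'$. Your added care about single-summand exchanges in $\cU$ pulling back to single-summand exchanges in $\cT$ fixing $T_f$ is exactly the compatibility the paper implicitly uses, so nothing beyond the paper's reasoning is needed.
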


In view of Theorem~\ref{thm.mut_cycles.FZmut} and \cite[Proposition 5.1]{BIRSm}, we have the following improvement of Setup~\ref{setup.cycle}. Suppose that the 2-CY tilted algebra $(T,T)$ is isomorphic to $\cJ(Q,\cP)$, the Jacobian algebra of a QP $(Q,\cP)$ (see Section~\ref{sec.QP}), where the quiver of $(Q,\cP)$ has no loops or 2-cycles. Let $\mu_{v_r} \cdots \mu_{v_1}$ be the sequence of FZ-mutations at the vertices $v_1,\ldots,v_r$ taking us from $Q_1:=Q,$ to $Q_r:=Q'$. Then we can relax condition~(a) of Setup~\ref{setup.cycle} to the following. 
\begin{enumerate} 
\item[(a')] No 2-cycles start in vertex $v_{i+1}$ in the quiver of $\mu_{v_i} \cdots \mu_{v_1} (Q_1,\cP_1)$ for $1 \le i \le r-1$. Here the mutations are of quivers with potential, where $(Q_1,\cP_1):= (Q,\cP)$ and $(Q_{i+1},\cP_{i+1}):= \mu_{v_i}(Q_i,\cP_i)$ for $1 \le i \le r-1$. 
\end{enumerate}

This assures that no loops or 2-cycles appear at each step, and that 
\[
(\mu_{v_i}\cdots\mu_{v_1}(T),\mu_{v_i}\cdots\mu_{v_1}(T)) \simeq \cJ(Q_{i+1},\cP_{i+1}) \text{ for } 1 \le i \le r-1 .
\] 

\section{Mutating at loops and 2-cycles}\label{section.mut.loops.2cycles}

 In this section, we build on the theory of Section~\ref{section.mutation.cycles}, in order to develop an algorithm to mutate the quivers of cluster-tilting objects in algebraic $2$-CY triangulated categories, having loops and $2$-cycles. This is done under certain restrictions, as we now explain.

Let $\cT$ be an algebraic 2-CY triangulated category such that no cluster-tilting object in $\cT$ has loops and/or 2-cycles. Suppose that $\pi \from \cT \to \ocT$ is a Galois covering of algebraic triangulated categories (then $\ocT$ automatically also is $2$-CY). Denote by $G$ the group of $k$-linear automorphisms of $\cT$ such that $\ocT = \cT/G$. For an object $X\in\cT$, we denote by $\oX=\pi(X)\in\ocT$. For an object $Y\in\ocT$ we denote by $\laY=\pi^{-1}(Y)\in \cT$. 

We now show that we can lift cluster-tilting objects from $\ocT$ to $\cT$.

\begin{proposition}\label{prop.lift.cto} Let $\dT$ be a cluster-tilting object in $\ocT$. The orbit $\uT = \downstairs{ \laT} $ is a cluster-tilting object in $\cT$.
\end{proposition}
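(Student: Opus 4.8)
The plan is to verify the defining properties of a cluster-tilting object directly, transporting each across the covering via the fundamental isomorphism of a Galois covering. Write $\dT=\bigoplus_{i=1}^{n}\overline{L}_i$ with each $\overline{L}_i$ indecomposable, and choose for each $i$ a lift $L_i\in\cT$, so that $\pi(L_i)=\overline{L}_i$; by the Galois property the fibre $\pi^{-1}(\overline{L}_i)$ is, up to isomorphism, the free orbit $\{gL_i : g\in G\}$, and hence $\add\uT=\add\{\,gL_i : g\in G,\ 1\le i\le n\,\}$. The key tool is that the covering isomorphisms
$$\textstyle\bigoplus_{g\in G}\cT(U,gV)\xrightarrow{\ \sim\ }\ocT(\overline{U},\overline{V}),\qquad \bigoplus_{g\in G}\cT(gV,U)\xrightarrow{\ \sim\ }\ocT(\overline{V},\overline{U}),$$
valid for any $U\in\cT$ and any $V$ lying over an indecomposable object $\overline{V}$, extend to the suspension: since $\pi$ is a covering functor of triangulated categories it commutes with $[1]$, and $\Ext^1_\cT(-,-)=\cT(-,-[1])$, so substituting $V[1]$ for $V$ (which still lies over the indecomposable $\overline{V}[1]$) yields
$$\textstyle\bigoplus_{g\in G}\Ext^1_\cT(U,gV)\cong\Ext^1_\ocT(\overline{U},\overline{V}),\qquad \bigoplus_{g\in G}\Ext^1_\cT(gV,U)\cong\Ext^1_\ocT(\overline{V},\overline{U}).$$
By local boundedness of the $G$-action all these sums are finite.

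\textbf{Rigidity.} First I would show $\Ext^1_\cT(\uT,\uT)=0$. Since each $g\in G$ is an autoequivalence, $\Ext^1_\cT(gL_i,hL_j)\cong\Ext^1_\cT(L_i,g^{-1}hL_j)$, so it suffices to prove $\Ext^1_\cT(L_i,gL_j)=0$ for all $g,i,j$. Applying the first $\Ext$-isomorphism with $U=L_i$ and $V=L_j$ gives $\bigoplus_{g}\Ext^1_\cT(L_i,gL_j)\cong\Ext^1_\ocT(\overline{L}_i,\overline{L}_j)=0$, the right-hand side vanishing because $\dT$ is rigid. As every summand of a zero direct sum is zero, each term $\Ext^1_\cT(L_i,gL_j)$ vanishes, so $\uT$ is rigid.

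\textbf{Maximality.} It is enough to show that every indecomposable $X\in\cT$ with $\Ext^1_\cT(\uT,X)=0$ lies in $\add\uT$, since an arbitrary such object decomposes into indecomposable summands each still orthogonal to $\uT$. Fix such an $X$. For each $i$ the second $\Ext$-isomorphism, with $V=L_i$ and $U=X$, gives $\bigoplus_{g}\Ext^1_\cT(gL_i,X)\cong\Ext^1_\ocT(\overline{L}_i,\overline{X})$; the left-hand side is $0$ because $X$ is orthogonal to all summands $gL_i$ of $\uT$, whence $\Ext^1_\ocT(\overline{L}_i,\overline{X})=0$ for every $i$, i.e.\ $\Ext^1_\ocT(\dT,\overline{X})=0$. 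As $\dT$ is cluster-tilting in $\ocT$, this forces $\overline{X}\in\add\dT$. Now I invoke two standard features of the Galois covering $\pi$ (see \cite{Gabriel,BG}): it preserves indecomposability, so $\overline{X}$ is indecomposable and hence $\overline{X}\simeq\overline{L}_i$ for some $i$; and $G$ acts transitively on $\pi^{-1}(\overline{L}_i)$, so $X\simeq gL_i$ for some $g\in G$. Thus $X$ is a summand of $\uT$, i.e.\ $X\in\add\uT$.

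\textbf{Functorial finiteness and the main obstacle.} Finally I would check that $\add\uT$ is functorially finite. Given $X\in\cT$, a right $\add\dT$-approximation $\dT^{N}\to\overline{X}$ exists downstairs because $\dT$ is cluster-tilting; the covering $\Hom$- and $\Ext$-isomorphisms above, being \emph{finite} direct sums by local boundedness, allow one to lift it to a right $\add\uT$-approximation of $X$ supported on finitely many summands, and dually for left approximations. I expect the main obstacle to be exactly this bookkeeping of finiteness: since $G$ is in general infinite, $\uT$ is an infinite orbit, so ``cluster-tilting object'' has to be read as ``cluster-tilting subcategory'', and one must confirm at each step that local boundedness keeps the relevant $\Hom$- and $\Ext$-spaces finite-dimensional, so that both the covering isomorphisms and the lifting of approximations are legitimate. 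The only other care needed is matching the two directions of the covering isomorphism to the correct source and target of the $\Ext$-groups and checking that $\pi$ commutes with $[1]$.
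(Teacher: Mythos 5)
Your proof is correct and follows essentially the same route as the paper's: both transport the rigidity of $\dT$ and the maximality condition across the covering via the $\Hom$-space bijections induced by $\pi$, concluding $\Ext^1_{\cT}(\uT,\uT)=0$ and that $\Ext^1_{\cT}(\uT,X)=0$ forces $\oX\in\add\dT$, hence $X\in\add\uT$. The only difference is that you spell out the summand-by-summand bookkeeping (free action, transitivity on fibres, compatibility with $[1]$) and the functorial-finiteness caveat for infinite $G$, which the paper's shorter proof leaves implicit.
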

\begin{proof}  

Using the bijections of the Hom-spaces between $\cT$ and $\ocT$ given by $\pi$, we have that
\[
 0 = \Ext^1_{\ocT}(\dT ,\upstairs{\oT}) \simeq \Ext^1_{\cT}(\uT, \uT )
\]   
and thus $\uT$ has no self extensions. Now assume that for $X$ in $\cT$, we have that $\Ext^1_{\cT}(\uT,X)=0$. This implies that $\Ext^1_{\ocT}(\dT,\oX)=0$, and thus $\oX$ is in $\add \dT$. But this just means that $\overleftarrow{\oX}$ is in $\add \uT$. In particular, $X$ is in $\add \uT$.
\end{proof}

 For the rest of the section, fix a basic cluster-tilting object $\dT = \dT_m \oplus \dT_f$ in $\ocT$, where the summand $\dT_m$ is indecomposable. Define $\ocD:= \add \dT_f$ and $\ocZ:=\lperp{\ocD}[1]$. Then the subfactor category $\ocU:=\ocZ/[\ocD]$ is a 2-CY triangulated category (see Subsection~\ref{section.IY}).  

 Write $\uT =\uT_m \oplus \uT_f$ for the lift of $\dT$ in $\cT$, such that $\uT_m = \downstairs{\laT}_{\! m}$ and $\uT_f = \downstairs{\laT}_{\! f}$. As in the previous paragraph, let $\cD:= \add \uT_f$ and define $\cZ$ to be the subcategory $\lperp{\cD}[1]$ of $\cT$. 
Again, we have that the subfactor category $\cU:=\cZ/[\cD]$ forms a 2-CY triangulated category. Then we have the following.

\begin{proposition} \label{prop.covering}
 The covering functor $\pi\from \cT \to \ocT$ induces a triangle functor $\wpi \from \cU \to \ocU$ that is also a Galois covering.
\end{proposition}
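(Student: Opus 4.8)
The plan is to transport the entire Iyama--Yoshino construction through $\pi$ by exploiting the $G$-equivariance of all the data involved. First I would record that $\cD=\add\uT_f$ and $\cZ=\lperp{(\cD[1])}$ are stable under the $G$-action: since $\uT_f$ is a $G$-orbit we have $g\cD=\cD$ for all $g\in G$, and then for $X\in\cZ$ one gets $\Ext^1_\cT(gX,\cD)=\Ext^1_\cT(gX,g\cD)\cong\Ext^1_\cT(X,\cD)=0$, so $g\cZ=\cZ$. Consequently $G$ acts on $\cU=\cZ/[\cD]$, as it preserves $[\cD]$ because $g[\cD]=[g\cD]=[\cD]$. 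Using that $\pi$ is a triangle covering functor (so its $\Hom$-bijections also hold for $\Ext^1$), I would check that $\pi$ maps $\cD$ into $\ocD$ and $\cZ$ into $\ocZ$, and is surjective onto the objects of $\ocZ$: any $A\in\ocZ$ has a lift $\tilde A\in\cT$, and $0=\Ext^1_{\ocT}(A,\ocD)$ is identified via the covering bijections with $\Ext^1_\cT(\tilde A,\cD)$ (using $G$-stability of $\cD$ to collect the fibre sum), forcing $\tilde A\in\cZ$. Since morphisms factoring through $\cD$ go to morphisms factoring through $\ocD$, $\pi$ descends to $\wpi\from\cU\to\ocU$ with $\wpi g=\wpi$.

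Next I would show $\wpi$ is a triangle functor. The shift $\shift{1}$ of $\cU$ is built from triangles $X\extto{\alpha_X}D_X\to X\shift{1}\to X[1]$ with $D_X\in\cD$ and $\alpha_X$ a left $\cD$-approximation. Applying $\pi$, which is triangulated, yields a triangle $\pi X\to\pi D_X\to\pi(X\shift{1})\to\pi X[1]$ with $\pi D_X\in\ocD$, and the key point is that $\pi(\alpha_X)$ is again a left $\ocD$-approximation. This follows from the covering bijections together with the $G$-stability of $\cD$: for indecomposable $\oD'\in\ocD$ with lift $\tilde D'\in\cD$, the map $\ocT(\pi D_X,\oD')\to\ocT(\pi X,\oD')$ is identified, summand by summand over $g\in G$, with the maps $\cT(D_X,g\tilde D')\to\cT(X,g\tilde D')$, each surjective because $g\tilde D'\in\cD$ and $\alpha_X$ is a left $\cD$-approximation. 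Hence $\pi(X\shift{1})$ realises $\wpi(X)\shift{1}$ in $\ocU$; by the same diagrammatic recipe defining $\shift{1}$ on morphisms (and the fact that $\pi$ is triangulated), $\wpi$ commutes with the shift and carries standard triangles of $\cU$ to standard triangles of $\ocU$, so $\wpi$ is a triangle functor.

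To see that $\wpi$ is a covering functor, I would first establish that for indecomposables of $\cZ$ not lying in $\cD$, isomorphism in $\cU$ coincides with isomorphism in $\cT$: if $X$ is such an object then $[\cD](X,X)\subseteq\rad\End_\cT(X)$ (a summand argument using that $\cD$ is closed under summands and $\End_\cT(X)$ is local), so any $\cU$-isomorphism lifts to a $\cT$-split monomorphism and hence, by indecomposability, to a $\cT$-isomorphism; the same holds downstairs. It follows that for an indecomposable $A\in\ocU$ with fixed indecomposable lift $\tilde A\in\cZ$, the fibre of $\wpi$ over $A$ is exactly the $G$-orbit $\{g\tilde A\}_{g\in G}$, which is also the fibre of the Galois covering $\pi$ over $A$. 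The covering bijection $\bigoplus_{g}\cT(X,g\tilde A)\xrightarrow{\sim}\ocT(\pi X,A)$ then has to be shown to descend to $\bigoplus_{g}\cU(X,g\tilde A)\xrightarrow{\sim}\ocU(\wpi X,A)$, and dually. Granting this, $\wpi$ is a covering functor that is surjective on objects, satisfies $\wpi g=\wpi$, and has $G$ acting transitively on its fibres (and, via the iso-detection above, freely and locally boundedly); by the characterisation of Galois coverings recalled above (see \cite{Gabriel}), $\wpi$ is a Galois covering.

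I expect the main obstacle to be precisely this last descent: proving that, under the covering isomorphism of $\pi$, the subspace $\bigoplus_g[\cD](X,g\tilde A)$ of families factoring through $\cD$ corresponds exactly to $[\ocD](\pi X,A)$. One inclusion is immediate, since $\pi$ sends $\cD$-factoring maps to $\ocD$-factoring maps; the reverse requires lifting a factorisation $\pi X\to\oD\to A$ through $\ocD$ and reassembling it into a $G$-indexed family of $\cD$-factoring maps, which forces one to invoke the precise rule for composition in a Galois covering, where composable morphisms downstairs lift to families that are made composable only after applying the $G$-action to the intermediate object. Everything else is formal bookkeeping on top of the $G$-equivariance established at the outset.
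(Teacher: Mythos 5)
Your proposal is correct and follows essentially the same route as the paper's proof: transport the Iyama--Yoshino data through $\pi$, using that $\uT_f$ is a full $G$-orbit so that $\cD$-approximations and standard triangles descend, and then verify the Galois-covering conditions. You are in fact more careful than the paper on the final step --- where the paper dismisses the covering-functor property and the free, transitive fibre action with ``it is not difficult to see'' --- and the descent you sketch for the covering bijections modulo $[\cD]$ (lifting a factorisation through $\ocD$ and reassembling it into a $G$-indexed family of $\cD$-factoring maps via the composition rule in a Galois covering, with the $G$-action applied to the intermediate object) is precisely the argument that makes that step rigorous.
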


\begin{proof}
 First, observe that $\pi$ sends $\cD$ onto $\ocD$ and $\cZ$ onto $\ocZ$. Thus we have a well defined functor $\wpi \from \cU \to \ocU$. Second, note that $\pi$ sends $\cD$-approximations to $\ocD$-approximations, since $\uT_{f}=\downstairs{\laT}_f$. Thus we have that $\wpi\circ \shift{1}_\cU=\shift{1}_\ocU \circ\wpi$, where $\shift{1}_{\cU}$ and $\shift{1}_{\ocU}$ denote the shift functors in the categories $\cU$ and $\ocU$, respectively.
 
 Now to see that $\wpi$ is a triangle functor, let $X\to Y \to Z \to X \shift{1}$ be a standard triangle in $\cU$. This triangle comes from the following commutative diagram of triangles in $\cT$
 \[\scalebox{1}{
 \begin{tikzpicture}[scale=2,yscale=-1]
 \node (X) at (1,1) {$X$};
 \node (Y) at (2,1) {$Y$};
 \node (Z) at (3,1) {$Z$};
 \node (X1) at (4,1) {$X[1]$};
 \node (XX) at (1,2) {$X$};
 \node (DX) at (2,2) {$D_X$};
 \node (Xs1) at (3,2) {$X\shift{1}$};
 \node (XX1) at (4,2) {$X[1]$};
 \draw [->] (X) -- node [auto] {$1_{X}$} (XX);
 \draw [->] (X) --  (Y);
 \draw [->] (Y) --  (Z);
 \draw [->] (Z) --  (X1);
 \draw [->] (XX) -- node [auto] {$\alpha_X$} (DX);
 \draw [->] (DX) -- node [auto] {$\beta_X$} (Xs1);
 \draw [->] (Xs1) --  (XX1);
 \draw [->] (Y) -- (DX);
 \draw [->] (Z) -- (Xs1); 
 \draw [->] (X1) -- node [auto] {$1_{X[1]}$} (XX1); 
 \end{tikzpicture}}
 \]
where the morphism $\alpha_X$ (resp.\ $\beta_X$) is a left (resp.\ right) $\cD$-approximation and $D_X$ belongs to $\cD$ (see Subsection~\ref{section.IY}). This diagram descends via $\pi$ to the following commutative diagram of triangles in $\ocU$
 \[ \scalebox{1}{
 \begin{tikzpicture}[scale=2,yscale=-1]
 \node (X) at (1,1) {$\oX$};
 \node (Y) at (2,1) {$\oY$};
 \node (Z) at (3,1) {$\oZ$};
 \node (X1) at (4,1) {$\oX[1]$};
 \node (XX) at (1,2) {$\oX$};
 \node (T) at (2,2) {$\oD_X$};
 \node (Xs1) at (3,2) {$\oX\shift{1}$};
 \node (XX1) at (4,2) {$\oX[1]$};
 \draw [->] (X) -- node [auto] {$1_{\oX}$} (XX);
 \draw [->] (X) -- (Y);
 \draw [->] (Y) -- (Z);
 \draw [->] (Z) -- (X1);
 \draw [->] (XX) -- node [auto] {$\pi(\alpha_X)$} (T);
 \draw [->] (T) -- node [auto] {$\pi(\beta_X)$} (Xs1);
 \draw [->] (Xs1) -- (XX1);
 \draw [->] (Y) -- (T);
 \draw [->] (Z) -- (Xs1); 
 \draw [->] (X1) -- (XX1); 
 \end{tikzpicture}}
 \]
 where the morphism $\pi(\alpha_X)$ (resp.\ $\pi(\beta_X)$) is a left (resp.\ right) $\ocD$-approximation and $\oD_X$ belongs to $\ocD$. Hence we obtain a standard triangle $\oX\to \oY \to \oZ \to \oX \shift{1}$ in $\ocU$. It is not difficult to see that $G$ acts freely and transitively on the fibers of $\wpi$. Thus $\wpi$ is a Galois covering.
\end{proof}

We now define the setup under which we can mutate at loops and $2$-cyles.

\begin{setup}[for mutation at loops and $2$-cycles] \label{setup.muation_loops} 
Let $\pi \from \cT \to \ocT$ be a Galois covering of algebraic 2-CY triangulated categories. Assume we have a (basic) cluster-tilting object $\dT = \dT_m \oplus \dT_f$ in $\ocT$, where the summand $\dT_m$ is indecomposable. We write $\uT_m=\downstairs{\laT}_{\! m}$, $\uT_f=\downstairs{\laT}_{\! f}$, and $\uT=\downstairs{\laT}$ ($= \uT_m \oplus \uT_f$). Furthermore, we have the following assumptions and notation:
\begin{enumerate}
 \item The category $\cT$ and the cluster-tilting objects $\uT =\uT_m \oplus \uT_f$ and $\uT' =\uT_m\shift{1}_{\cU} \oplus \uT_f$ in $\cT$, are as in Setup~\ref{setup.cycle}.
 \item Denote by $\oQ$ the quiver of $(\dT,\dT)$. The functor $\pi\from \cT \to \ocT$ induces a covering morphisms of quivers, which we also denote by $\pi\from Q \to \oQ$.
 \item Let $\oQ_{m,m}$ denote the quiver of $(\dT_m,\dT_m)/[\dT_f]$. Then $\oQ_{m,m}$ has a single vertex, possibly with a loop and/or $2$-cycles adjacent to this vertex in $\oQ$.
\end{enumerate}
\end{setup}

\begin{remark}  
Observe that under the setup above, the quiver $Q_{m,m}$ is a (possibly disjoint union of ) cycle(s) of length $l \ge 3$ in $Q$ with minimal relations given by the paths of length $l$. 
\end{remark}

 We now present the main theorem of this section.

\begin{theorem}[Mutation at loops and 2-cycles]\label{theorem.mutation.loops} Let $\pi\from \cT \to \ocT$ be a Galois covering of algebraic $2$-CY triangulated categories. Let $\dT=\dT_m\oplus \dT_f$ be a (basic) cluster-tilting object in $\ocT$ with $\dT_m$ indecomposable and denote by $\dT'_m$ the other complement of the almost complete cluster-tilting object $\dT_f$. Write $\uT=\uT_m \oplus \uT_f$ for the lift of $\dT$ to $\cT$ via $\pi$. Suppose that the conditions of Setup~\ref{setup.muation_loops} are satisfied. Then, using the same notation as in Setup~\ref{setup.muation_loops}, we have that $\dT'=\dT'_m \oplus \dT_f = \pi(\uT')$. Furthermore, if $\oQ$ denotes the quiver of $(\dT',\dT')$, then $\oQ'=\pi(Q')$.
\end{theorem}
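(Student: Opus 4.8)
The plan is to prove both assertions by transporting the mutation through the Iyama--Yoshino reductions and exploiting that the induced functor $\wpi \from \cU \to \ocU$ is a Galois covering commuting with the shift, as established in Proposition~\ref{prop.covering}. Throughout, the key input is that the two reductions (upstairs along $\cD = \add\uT_f$, downstairs along $\ocD = \add\dT_f$) are compatible with $\pi$ and $\wpi$.

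First I would set up the two reduction pictures. Downstairs, since $\dT_m$ is indecomposable, $\dT_f$ is an almost complete cluster-tilting object in $\ocT$; by the $2$-CY property and \cite{IY} it has exactly two complements $\dT_m$ and $\dT'_m$, so $\dT' = \dT'_m \oplus \dT_f$ is again cluster-tilting. Under the Iyama--Yoshino bijection $R_{\ocT}$ between cluster-tilting objects of $\ocT$ containing $\dT_f$ and cluster-tilting objects of $\ocU$, we have $R_{\ocT}(\dT) = \dT_m$ and $R_{\ocT}(\dT') = \dT_m\shift{1}_{\ocU}$, the mutation of the indecomposable cluster-tilting object $\dT_m$ in $\ocU$ (here $\dT_m\shift{1}_{\ocU}\cong\dT_m\shift{-1}_{\ocU}$ automatically, this being the second complement in a $2$-CY category). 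Upstairs, the analogous bijection $R_{\cT}$ (in the situation of Setup~\ref{setup.cycle}, with Construction~\ref{cons.mut_several_summands} applicable since $\uT_m\cong\uT_m\shift{2}_{\cU}$) sends $R_{\cT}(\uT) = \uT_m$ and $R_{\cT}(\uT') = \uT_m\shift{1}_{\cU}$.

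For the first statement I would verify that the square relating $R_{\cT}$ and $R_{\ocT}$ through $\pi$ and $\wpi$ commutes, i.e. $\wpi \circ R_{\cT} = R_{\ocT} \circ \pi$ on objects. Since $\uT_m = \wpi^{-1}(\dT_m)$ is the fiber and, by Proposition~\ref{prop.covering}, $\wpi$ is $G$-equivariant and commutes with the shift ($\wpi\circ\shift{1}_{\cU} = \shift{1}_{\ocU}\circ\wpi$), I obtain $\wpi(\uT_m\shift{1}_{\cU}) = (\wpi\,\uT_m)\shift{1}_{\ocU} = \dT_m\shift{1}_{\ocU}$. Hence $\wpi\bigl(R_{\cT}(\uT')\bigr) = \dT_m\shift{1}_{\ocU} = R_{\ocT}(\dT')$, while the commuting square gives $\wpi\bigl(R_{\cT}(\uT')\bigr) = R_{\ocT}\bigl(\pi(\uT')\bigr)$. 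Because $R_{\ocT}$ is a bijection, this forces $\pi(\uT') = \dT' = \dT'_m \oplus \dT_f$, proving the first claim. For the second statement I would use that $\uT'$ is the lift of $\dT'$, so $\pi$ restricts to a covering functor $\add\uT' \to \add\dT'$; as in Setup~\ref{setup.muation_loops}(b) this induces a covering morphism of Gabriel quivers $Q' \to \oQ'$, and transitivity of the $G$-action on fibers yields $\oQ' = Q'/G = \pi(Q')$.

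The main obstacle is verifying the compatibility of the two Iyama--Yoshino reductions with the coverings, namely that passing to fibers commutes with the subfactor shift. This rests on choosing the $\cD$-approximations $G$-equivariantly, which is possible because $\cD = \add\uT_f$ with $\uT_f = \pi^{-1}(\dT_f)$ is $G$-stable and $\pi$ carries $\cD$-approximations to $\ocD$-approximations, exactly the ingredients assembled in the proof of Proposition~\ref{prop.covering}. Everything else is bookkeeping: once the square commutes and $\wpi$ is known to intertwine the shifts, the two statements follow from the bijectivity of $R_{\ocT}$ and the standard translation of a Galois covering of endomorphism algebras into a covering of quivers.
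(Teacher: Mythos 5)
Your proposal is correct and is essentially the paper's own argument: both hinge on Proposition~\ref{prop.covering} (the induced functor $\wpi\from\cU\to\ocU$ is a Galois covering satisfying $\wpi\circ\shift{1}_{\cU}=\shift{1}_{\ocU}\circ\wpi$), which yields $\wpi(\uT_m\shift{1}_{\cU})\cong \dT_m\shift{1}_{\ocU}\cong \dT'_m$ and hence $\pi(\uT')=\dT'$, and both then obtain $\oQ'=\pi(Q')$ from the induced covering of endomorphism algebras and of their quivers. Two minor remarks: your appeal to injectivity of $R_{\ocT}$ tacitly assumes that $\pi(\uT')$ lies in its domain (i.e.\ is already known to be cluster-tilting in $\ocT$ containing $\dT_f$), which you never verify, but the detour is unnecessary, since isomorphism in $\ocU$ gives isomorphism in $\ocT$ up to summands in $\add\dT_f$ and hence $\pi(\uT')=\dT'_m\oplus\dT_f$ directly; also, the paper separately treats the degenerate case where $\oQ_{m,m}$ has $2$-cycles but no loop (there $Q_{m,m}$ is a disjoint union of isolated vertices and FZ-mutation replaces the cycle rule), though this concerns only how $Q'$ is computed, not the identities $\pi(\uT')=\dT'$ and $\oQ'=\pi(Q')$.
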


\begin{proof}
From the proof of Proposition~\ref{prop.covering}, we see that mutating in $\ocT$ corresponds to mutating in $\cT$ as in Construction~\ref{cons.mut_several_summands}.

If the quiver of $\dT$ has loops or 2-cycles, these disappear when lifting to $\uT$ since by assumption cluster-tilting objects in $\cT$ have no loops or $2$-cycles. This allows us to mutate via the cover as follows: We mutate $Q$ at $Q_{m,m}$ using Theorem~\ref{rule.mut_min_cycle} to obtain $Q'$. Then $\oQ'$, the quiver of $\dT'$, is given by $\pi(Q')$.

Let us consider the case when $\oQ_{m,m}$ does not have a loop, but there are possibly $2$-cycles adjacent to the only vertex in $\oQ_{m,m}$. Then it is not difficult to see that the subfactor category $\cU$ of $\cT$ is just the product of cluster categories of type $A_1$. This means that the quiver $Q_{m,m}$ is a disjoint union of isolated vertices, and FZ-quiver mutation at all the vertices in $Q_{m,m}$ (in any order) in the cover gives the correct answer. Then project back using $\pi$.\qedhere
\end{proof}

\begin{remark} When $\cT$ is a cluster category, we know that the relations of any cluster-tilted algebra are determined by its quiver (\cite[5.11]{BIRSm}). In this case, the relations of the $2$-CY tilted algebra $(\dT,\dT)$ are uniquely determined by the cover. 
\end{remark}

We now present an example that illustrates the procedure above.

\begin{example}\label{ex.mut.loop.d6}
Let $\cT = \cD_{6,1}$ be the cluster category of $\bbD_6$ and $\ocT=\cD_{6,3}$ the covering of order $3$ (see the diagram below -- also see Definition~\ref{def.2cy.cats} for notation). It is not hard to see that we have a Galois covering $\pi\from \cT \to \ocT$. Let $\dT=T_1 \oplus T_2$ be the cluster-tilting object in $\ocT$ and denote its lift to $\cT$ by $\uT=\oplus_{i,j} T_{i_j}$ for $1\le i \le 2$ and $1\le j \le 3$, both shown in the figure below. Similarly, let $\dT'=T'_1\oplus T_2$ be the cluster-tilting object obtained by replacing $T_1$ in $\dT$. We can also obtain $\dT'$ by replacing $\oplus_{j=1}^3 T_{1_j}$ by $\oplus_{j=1}^3 T_{1_j}^*$ in $\uT$ and then projecting to $\ocT$ via $\pi$, as illustrated in the figure below.
 \[
 \scalebox{1}{\begin{tikzpicture}[scale=0.7,yscale=-1]  
\node at (6,5) {$\cT$};
 \foreach \x in {0,...,6}
  \foreach \y in {2,4}
   \node (\x-\y) at (\x*2,\y-1) [vertex] {};
 \foreach \x in {0,...,5}
  \foreach \y in {1,3,5}
   \node (\x-\y) at (\x*2+1,\y-1)  [vertex] {};
 \foreach \x in {0,...,5}
  \foreach \y in {6}
   \node (\x-\y) at (\x*2+1,1)  [vertex] {};   
 \replacevertex[fill1]{(0-1)} {[tvertex] {$T_{1_1}$}}
 \replacevertex[fill1]{(0-6)} {[tvertex] {$T'_{1_1}$}}
 \replacevertex[fill1]{(2-1)} {[tvertex] {$T_{1_2}$}}
 \replacevertex[fill1]{(2-6)} {[tvertex] {$T'_{1_2}$}}
\replacevertex[fill1]{(4-1)} {[tvertex] {$T_{1_3}$}}
 \replacevertex[fill1]{(4-6)} {[tvertex] {$T'_{1_3}$}}
 \replacevertex[fill1]{(1-3)} {[vertex] {}}
 \replacevertex[fill1]{(3-3)} {[vertex] {}}
 \replacevertex[fill1]{(5-3)} {[vertex] {}}
 \replacevertex{(0-5)} {[tvertex] {$T_{2_2}$}}
 \replacevertex{(2-5)} {[tvertex] {$T_{2_3}$}}
 \replacevertex{(4-5)} {[tvertex] {$T_{2_1}$}}
 \foreach \xa/\xb in {0/1,1/2,2/3,3/4,4/5,5/6}
  \foreach \ya/\yb in {2/1,2/6,2/3,4/3,4/5}
   {
    \draw [->] (\xa-\ya) -- (\xa-\yb);
    \draw [->] (\xa-\yb) -- (\xb-\ya);
   }
\draw [dashed] (0,0.5) -- (0,3.5); 
\draw [dashed] (12,0.5) -- (12,3.5); 
\draw [->] (13,2) -- node [above] {$\pi$} (15,2);
 \end{tikzpicture}
\quad 
 \begin{tikzpicture}[scale=0.7,yscale=-1]  
\node at (2,5) {$\ocT$};
 \foreach \x in {0,1,2}
  \foreach \y in {2,4}
   \node (\x-\y) at (\x*2,\y-1) [vertex] {};
 \foreach \x in {0,1}
  \foreach \y in {1,3,5}
   \node (\x-\y) at (\x*2+1,\y-1)  [vertex] {};
 \foreach \x in {0,1}
  \foreach \y in {6}
   \node (\x-\y) at (\x*2+1,1)  [vertex] {};   
 \replacevertex[fill1]{(0-1)} {[tvertex] {$T_1$}}
 \replacevertex[fill1]{(0-6)} {[tvertex] {$T'_1$}}
 \replacevertex[fill1]{(1-3)} {[vertex] {}}
 \replacevertex{(0-5)} {[tvertex] {$T_2$}}
 \foreach \xa/\xb in {0/1,1/2}
  \foreach \ya/\yb in {2/1,2/6,2/3,4/3,4/5}
   {
    \draw [->] (\xa-\ya) -- (\xa-\yb);
    \draw [->] (\xa-\yb) -- (\xb-\ya);
   }
\draw [dashed] (0,0.5) -- (0,3.5); 
\draw [dashed] (4,0.5) -- (4,3.5); 
 \end{tikzpicture}}
\]

 In the figure above, the subfactor categories $\cU$ and $\ocU$ are presented in light grey. They correspond to the $2$-CY triangulated categories $\cA_{3,1}$ and $\cA_{3,3}$, respectively (see Definition~\ref{def.2cy.cats}). At the level of quivers, we obtain the following picture:

\[\scalebox{1}{ \begin{tikzpicture}[scale=0.8]
\node at (-3,0) {$Q:$};
\node (1-1) at (2,0) {$1_1$};
\node (2-1) at (1,1.73) {$2_1$};
\node (1-2) at (-1,1.73) {$1_2$};
\node (2-2) at (-2,0) {$2_2$};
\node (1-3) at (-1,-1.73) {$1_3$};
\node (2-3) at (1,-1.73) {$2_3$};
\draw[->] (1-1) -- node [auto] {$\alpha_1$} (1-2);
\draw[->] (1-2) -- node [auto] {$\alpha_2$} (1-3);
\draw[->] (1-3) -- node [auto] {$\alpha_3$} (1-1);
\draw[->] (1-2) -- node [auto] {$\beta_1$} (2-1);
\draw[->] (1-3) -- node [auto] {$\beta_2$} (2-2);
\draw[->] (1-1) -- node [auto] {$\beta_3$} (2-3);
\draw[->] (2-1) -- node [auto] {$\gamma_1$} (1-1);
\draw[->] (2-2) -- node [auto] {$\gamma_2$} (1-2);
\draw[->] (2-3) -- node [auto] {$\gamma_3$} (1-3);
\node at (9,0) {$:\oQ$};
\node (1) at (6,0) {$1$};
\node (2) at (8,0) {$2$};
 \draw [->] (1) .. controls (5,0.75) and (5,-0.75) .. node [left] {$\alpha$} (1);
\draw [->] (6.25,-0.1) -- node [below] {$\beta$} (7.75,-0.1);
\draw [->] (7.75,0.1) -- node [above] {$\gamma$} (6.25,0.1);
\draw [->] (0,-2.73) -- node [right] {$\mu_{\left\{1_1,1_2,1_3\right\}}$} (0,-4.73);
\draw [->] (3,0) -- node [above] {$\pi$} (4.5,0);
\draw [->] (7,-1) -- node [right] {$\mu_{1}$} (7,-6.5);
 \pgftransformshift{\pgfpoint{0cm}{-7.5cm}};
\node at (-3,0) {$Q':$};
\node (1-1) at (2,0) {$1'_1$};
\node (2-1) at (1,1.73) {$2_1$};
\node (1-2) at (-1,1.73) {$1'_2$};
\node (2-2) at (-2,0) {$2_2$};
\node (1-3) at (-1,-1.73) {$1'_3$};
\node (2-3) at (1,-1.73) {$2_3$};
\draw[->] (1-1) -- node [auto] {$\alpha_1$} (1-2);
\draw[->] (1-2) -- node [auto] {$\alpha_2$} (1-3);
\draw[->] (1-3) -- node [auto] {$\alpha_3$} (1-1);
\draw[->] (1-2) -- node [auto] {$\beta_1$} (2-1);
\draw[->] (1-3) -- node [auto] {$\beta_2$} (2-2);
\draw[->] (1-1) -- node [auto] {$\beta_3$} (2-3);
\draw[->] (2-1) -- node [auto] {$\gamma_1$} (1-1);
\draw[->] (2-2) -- node [auto] {$\gamma_2$} (1-2);
\draw[->] (2-3) -- node [auto] {$\gamma_3$} (1-3);
\node at (10,0) {$:\oQ'=\pi(Q')$};
\node (1) at (6,0) {$1'$};
\node (2) at (8,0) {$2$};
 \draw [->] (1) .. controls (5,0.75) and (5,-0.75) .. node [left] {$\alpha$} (1);
\draw [->] (6.25,-0.1) -- node [below] {$\beta$} (7.75,-0.1);
\draw [->] (7.75,0.1) -- node [above] {$\gamma$} (6.25,0.1);
\draw [->] (3,0) -- node [above] {$\pi$} (4.5,0);
\end{tikzpicture}
}\]
 Here, $Q$ and $\oQ$ denote the quivers of the endomorphism rings of $\uT$ and $\dT$ respectively. Applying the mutation rule for cycles to $Q$ we obtain the quiver $Q'$. Then mutating $\oQ$ at vertex $1$ we obtain $\oQ'$, which is isomorphic to the quiver given by $\pi(Q')$. The potential of $\oQ$ (resp.\ $\oQ'$) is determined by the potential of $Q$ (resp.\ $Q'$).
\end{example}

\begin{example}\label{ex.mut.loop.a9}
Let $\cT=\cA_{9,1}$ be the cluster category of $\bbA_9$ and $\ocT=\cA_{9,3}$ the covering of order $3$ (see the diagram below -- also see Definition~\ref{def.2cy.cats} for notation). Then we have a Galois covering $\pi\from \cT \to \ocT$ of $2$-CY triangulated categories. Let $\dT=T_1\oplus T_2 \oplus T_3$ be the cluster-tilting object in $\ocT$ shown below. We denote by $\dT'=T'_1\oplus T_2 \oplus T_3$ the resulting cluster-tilting object obtained by replacing $T_1$ in $\dT$. 
 \[
 \scalebox{1}{\begin{tikzpicture}[scale=0.6,yscale=-1]  
  \fill [fill1] (0,4.6) arc (-90:90:12pt);
  \fill [fill1] (4,5.4) arc (90:270:12pt);
 \foreach \x in {0,1,2}
  \foreach \y in {1,3,5,7,9}
   \node (\x-\y) at (\x*2,\y) [vertex] {};
 \foreach \x in {0,1}
  \foreach \y in {2,4,6,8}
   \node (\x-\y) at (\x*2+1,\y)  [vertex] {};
 \replacevertex{(0-1)} {[tvertex] {$T_{3}$}}
 \replacevertex{(2-1)} {[tvertex] {$T_{2}$}}
 \replacevertex[fill1]{(1-3)} {[tvertex] {$T_{1}$}}
  \replacevertex{(2-9)} {[tvertex] {$T_{3}$}}
  \replacevertex{(0-9)} {[tvertex] {$T_{2}$}}
 \replacevertex[fill1]{(1-7)} {[tvertex] {$T'_{1}$}}
 \foreach \xa/\xb in {0/1,1/2}
  \foreach \ya/\yb in {1/2,3/2,3/4,5/4,5/6,7/6,7/8,9/8}
   {
    \draw [->] (\xa-\ya) -- (\xa-\yb);
    \draw [->] (\xa-\yb) -- (\xb-\ya);
   }
\draw [dashed] (0,.5) -- (0,0.75); 
\draw [dashed] (0,1.25) -- (0,8.75); 
\draw [dashed] (0,9.25) -- (0,9.5); 
\draw [dashed] (4,.5) -- (4,0.75);
\draw [dashed] (4,1.25) -- (4,8.75);
\draw [dashed] (4,9.25) -- (4,9.5);
 \end{tikzpicture}}
\]
The lift $\uT$ (resp.\ $\uT'$) of $\dT$ (resp.\ $\dT'$) to $\cT$ is shown in Example~\ref{example.a9}. The mutation of $Q$, the quiver of $\uT$, at the cycle $\{1_1,1_2,1_3\}$ in order to obtain $Q'$, the quiver of $\uT'$, is illustrated in Example~\ref{example.quivers.a9}. Then after mutating $\oQ$ at vertex $1$ we obtain $\oQ'$, which is isomorphic to the quiver given by $\pi(Q')$. Therefore  we have the following commutative diagram.
\[
\scalebox{1}{ \begin{tikzpicture}[scale=0.8,yscale=-1]
 \node at (4,7.464) {$\oQ$};
 \node at (4,3.7) {$\alpha$}; 
\node (M3) at (4,4.732)  {$1$};
 \node (F13) at (3,6.464)  {$2$};
 \node (F23) at (5,6.464)  {$3$};
\draw [->] (M3) .. controls (3,3.732) and (5,3.732) .. (M3);
 \draw [->] (M3) -- node [auto,swap] {$\beta$} (F13);
 \draw [->] (F13) -- node [auto,swap] {$\gamma$} (F23);
 \draw [->] (F23) -- node [auto,swap] {$\delta$} (M3);
\draw [->] (6,6.5) to [bend left] node [below] {$\mu_1$} (12,6.5);
\draw [->] (6,5.5) to [bend right] node [above] {$\pi\comp\mu_{\left\{1_1,1_2,1_3 \right\}}\comp\pi^{-1}$} (12,5.5);
  \pgftransformshift{\pgfpoint{10cm}{0cm}};
 \node at (4,7.464) {$\oQ'=\pi(Q')$};
 \node at (4,3.7) {$\alpha$}; 
 \node (M3) at (4,4.732)  {$1$};
 \node (F13) at (5,6.464)  {$3$};
 \node (F23) at (3,6.464)  {$2$};
\draw [->] (M3) .. controls (3,3.732) and (5,3.732) .. (M3);
 \draw [->] (M3) -- node [auto] {$[\delta\alpha^2]^t$} (F13);
 \draw [->] (F13) -- node [auto] {$[\delta\alpha\beta]$} (F23);
 \draw [->] (F23) -- node [auto] {$[\alpha^2\beta]^t$} (M3);
 \end{tikzpicture}}
\]
The potential of $\oQ$ (resp.\ $\oQ'$) is determined by the potential of $Q$ (resp.\ $Q'$). 
\end{example}

\section{2 CY-tilted algebras of finite type}\label{section.2CY.finite.type}
 
  In this section, we give a description of the mutation classes of the 2-CY tilted algebras (which are not cluster-tilted) coming from standard algebraic 2-CY triangulated categories with a finite number of indecomposables. (A category is called standard if it is equivalent to the mesh category of its Auslander-Reiten quiver.) We will see that these types of algebras always satisfy our setup for mutating at loops. Thus by using our mutation rule developed in the previous section, we will be able to mutate at any vertex.

 Let $k$ be an algebraically closed field. The 2-CY tilted algebras of finite type appear as endomorphism rings of cluster-tilting objects in $k$-linear 2-CY triangulated categories $\cT$ with a finite number of indecomposables. In \cite{BIKR} the authors prove that the existence of cluster-tilting objects in these categories follows from the shape of their AR-quiver. These shapes were described in \cite{A1,XZ}. 

First, let us fix a numbering and an orientation of the simply-laced Dynkin quivers.

\[\scalebox{1}{ \begin{tikzpicture}[xscale=1.5,yscale=-1.2]
\node (A) at (0,0) {$\bbA_n:$};
\node (A1) at (1,0) {$1$};
\node (A2) at (2,0) {$2$};
\node (A3) at (3,0) {$3$};
\node (A4) at (4,0) {$\cdots$};
\node (A5) at (5.2,0) {$n-1$};
\node (A6) at (6.4,0) {$n$};
\node (D) at (0,1) {$\bbD_n:$};
\node (D1) at (1,1) {$1$};
\node (D2) at (2,1) {$2$};
\node (D3) at (3,1) {$3$};
\node (D4) at (4,1) {$\cdots$};
\node (D5) at (5.2,1) {$n-2$};
\node (D6) at (6.4,0.5) {$n-1$};
\node (D7) at (6.4,1.5) {$n$};
\node (E) at (0,2.5) {$\bbE_n:$};
\node (E1) at (1,2.5) {$1$};
\node (E2) at (2,2.5) {$2$};
\node (E3) at (3,2.5) {$3$};
\node (E4) at (3,1.5) {$4$};
\node (E5) at (4,2.5) {$5$};
\node (E6) at (5.2,2.5) {$\cdots$};
\node (E7) at (6.4,2.5) {$n$};
\draw[->] (A1) -- (A2);
\draw[->] (A2) -- (A3);
\draw[->] (A3) -- (A4);
\draw[->] (A4) -- (A5);
\draw[->] (A5) -- (A6);
\draw[->] (D1) -- (D2);
\draw[->] (D2) -- (D3);
\draw[->] (D3) -- (D4);
\draw[->] (D4) -- (D5);
\draw[->] (D6) -- (D5);
\draw[->] (D7) -- (D5);
\draw[->] (E2) -- (E1);
\draw[->] (E3) -- (E2);
\draw[->] (E3) -- (E4);
\draw[->] (E3) -- (E5);
\draw[->] (E5) -- (E6);
\draw[->] (E6) -- (E7);
\end{tikzpicture}} \]

For the definition of a translation quiver, we refer the reader to \cite[Chapter~VII]{ARS}.

\begin{definition} \label{automorphism.def}  
For a Dynkin quiver $\Delta$ we define the following automorphisms of the translation quiver $(\bbZ \Delta,\tau)$ (in all cases $S$ is the combinatorial description of the shift functor):
\begin{enumerate}
\item If $\Delta=\bbA_n$, define $S(i,p)= (i+p,n+1-p)$ where $i\in\bbZ$ and $p$ is a vertex of $\bbA_n$. Moreover, define
 \[ \phi = \begin{cases}
              \tau^{\frac{n}{2}}S & \text{if $n$ is even,} \\ 
              \tau^{\frac{n+1}{2}}S & \text{if $n$ is odd.}
            \end{cases}\]
 Observe that for $n$ even $\phi^2=\tau^{-1}$, and for $n$ odd $\phi^2=1$.
\item If $\Delta=\bbD_n$, define $\phi$ to be the automorphism exchanging vertices $n$ and $n-1$, and let 
 \[ S = \begin{cases}
              \tau^{-n+1} & \text{if $n$ is even,} \\ 
              \tau^{-n+1}\phi & \text{if $n$ is odd.}
            \end{cases}\] 
\item If $\Delta=\bbE_8$ then $S=\tau^{-15}$.
\end{enumerate}
\end{definition}
The following theorem has been adapted to our setup.

\begin{theorem}[{\cite[Theorem~8.2~(1)]{BIKR}}] \label{thm.classification} Let $\cT$ be a 2-CY triangulated category, not a cluster category, with a finite number of indecomposables. Then $\cT$ has a cluster-tilting object if and only if the AR-quiver of $\cT$ is $\bbZ\Delta/g$ for a Dynkin diagram $\Delta$ and $g\in \Aut\bbZ\Delta$ in the table below.  
\begin{center}
  \newcommand{\rb}[1]{\raisebox{2ex}[0pt]{#1}}
  \renewcommand{\arraystretch}{1.4} 
  \setlength{\tabcolsep}{3mm} 
   \begin{tabular}{|c|c|c|c|c|}\hline 
{\boldmath $\Delta$} & \boldmath $\Aut \bbZ \Delta$ & \boldmath $g$ & \multicolumn{2}{|c|}{\bf Restrictions}  \\ \hline 
 & $\bbZ $ & $\phi^{\frac{n+3}{3}}$  & $n$ even &    \\ \cline{2-4}
    \rb{$\bbA_{n}$} & $\bbZ \times \bbZ/2\bbZ$ & $\tau^{\frac{n+3}{6}} \phi $ & $n$ odd &  \rb{$3|n$}  \\ \hline
  & $\bbZ \times \bbZ/2\bbZ$ & $\tau^m \phi^{\om} $ & $n$ even &  \\ \cline{2-4}
\rb{$\bbD_{n}$} & $\bbZ \times \bbZ/2\bbZ$ & $\tau^m \phi $ & $n$ odd & \rb{$m|n$, $n>4$} \\ \hline
{$\bbD_{4}$} &  $\bbZ \times S_3$ & $\tau^m\sigma $ &  \multicolumn{2}{|c|}{$  m|4 , \,\sigma^{\frac{4}{m}}=1, \, (m,\sigma)\ne (1,1)$ } \\ \hline
 {$\bbE_{8}$} & $\bbZ$ & $\tau^8$ &   \multicolumn{2}{|c|}{} \\ \hline
   \end{tabular}
\end{center}
Here $S_3$ is the permutation group of three elements and $\phi$ is the automorphism of $\bbZ\Delta$ as in Definition~\ref{automorphism.def}.
\end{theorem}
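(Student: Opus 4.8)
The plan is to prove the classification in three stages: reduce the Auslander--Reiten quiver of $\cT$ to a quotient of $\bbZ\Delta$, use the $2$-CY condition to cut down the admissible group, and finally decide existence of a cluster-tilting object.

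First I would use that $\cT$ has finitely many indecomposables, so its AR-quiver $\Gamma$ is a finite connected stable translation quiver. By the structure theory of such quivers (Riedtmann's theorem; cf.\ \cite{A1,XZ}) the universal cover of $\Gamma$ is $\bbZ\Delta$ for a uniquely determined Dynkin tree $\Delta$, whence $\Gamma\simeq\bbZ\Delta/G$ for a group $G\le\Aut\bbZ\Delta$ acting freely and admissibly. Since $\cT$ is standard it is the mesh category of $\Gamma$, so morphisms and extensions between indecomposables are computed directly from the mesh relations. Both the shift $[1]$ and the translation $\tau$ lift to automorphisms of $\bbZ\Delta$, and I would fix the combinatorial shift $S$ as in Definition~\ref{automorphism.def}.

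Second I would invoke the $2$-CY hypothesis. Any triangulated category with Serre functor satisfies $\nu=\tau\comp[1]$, so $\nu=[2]$ forces $\tau=[1]$ as autofunctors. Lifted to the cover this means $S$ and $\tau$ agree modulo $G$, i.e.\ $S\tau^{-1}\in G$. Writing $G=\langle g\rangle$ (respectively $G=\langle\tau^m,\sigma\rangle$ in the $\bbD_4$ case) and substituting the explicit formulas for $S$, $\tau$ and $\phi$ from Definition~\ref{automorphism.def}, this relation together with freeness of the action confines $g$ to the families in the table; the numerical restrictions ($3\mid n$ for $\bbA_n$, $m\mid n$ for $\bbD_n$, the conditions on $(m,\sigma)$ for $\bbD_4$, and $\tau^8$ for $\bbE_8$) are exactly what is needed for $g$ to be a well-defined fixed-point-free automorphism. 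Discarding $g=\tau^{-1}S$, which yields the cluster category of $\Delta$, matches the hypothesis that $\cT$ is not a cluster category and leaves precisely the listed entries.

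The crux is the third stage: for a given shape $\bbZ\Delta/G$, decide whether a cluster-tilting object exists. This is a combinatorial question, since $\Ext^1(X,Y)$ is read from the mesh relations and, by Iyama--Yoshino \cite{IY}, a maximal rigid object in a $2$-CY category is automatically cluster-tilting. I would establish the reverse implication by exhibiting, for each $g$ in the table, an explicit maximal rigid family of indecomposables, and the forward implication by showing that for every other admissible $G$ no such family exists. This existence analysis is exactly \cite[Theorem~8.2~(1)]{BIKR}, so in practice I would cite it for the heavy lifting; the part specific to our formulation is transcribing that classification into the $\Aut\bbZ\Delta$-language of Definition~\ref{automorphism.def}, matching each datum of \cite{BIKR} with an automorphism $g$ and checking that the divisibility conditions coincide. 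The main obstacle is this existence step: the reduction to $\bbZ\Delta/G$ and the $2$-CY bookkeeping are formal, while determining for exactly which quotients a cluster-tilting object survives needs the delicate case analysis of \cite{BIKR}, including the exceptional low-rank behaviour at $\bbD_4$ (with its $S_3$-symmetry) and at $\bbE_8$.
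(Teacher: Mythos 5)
Your proposal matches the paper's treatment of this statement: the paper gives no independent proof, but simply quotes \cite[Theorem~8.2~(1)]{BIKR} ``adapted to our setup,'' which is exactly your third stage, while your first two stages (Riedtmann's covering theorem and the Serre-functor bookkeeping forcing $\tau^{-1}S \in G$, i.e.\ $F\in\langle g\rangle$) are the standard translation into the $\Aut\bbZ\Delta$ language that this adaptation requires. One small caveat: standardness of $\cT$ is \emph{not} a hypothesis of this theorem (it only enters later, in Theorem~\ref{thm.classification_new}), so your appeal to $\cT$ being the mesh category of its AR-quiver should be dropped; since the existence analysis is deferred to \cite{BIKR} anyway, this does not affect the substance of your argument.
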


 Let $F$ be the automorphism $\tau^{-1}S$ of $\bbZ\Delta$ and let $n$ be a positive integer as in Theorem~\ref{thm.classification}. Then we note that in the case $\bbA_{n}$ (resp.\ $\bbD_{n}$, $\bbE_8$) we have $g^3=F$ (resp.\ $g^{\frac{n}{m}}=F$, $g^2=F$). Then we define the following.
  
\begin{definition} \label{def.2cy.cats}
Let $\ell$ and $n$ be two positive integers such that $\ell$ divides $n$. We denote by $\cA_{n,\ell}$ (resp.\ $\cD_{n,\ell}$ and $\cE_{n,\ell}$) the standard algebraic $2$-CY triangulated category having AR-quiver $\bbZ \bbA_n /g$ (resp.\ $\bbZ \bbD_n /g$ and  $\bbZ \bbE_n /g$), where in each case we have that $g^{\ell} = F$.
\end{definition}

\begin{remark}
This definition only makes sense if we ask our triangulated category to be standard and algebraic. By \cite[7.0.5]{A1}, it is known that these categories are unique up to a triangle isomorphism. With the above definitions, the cluster categories of type $\bbA_n$, $\bbD_n$ and $\bbE_8$ are denoted  by $\cA_{n,1}$, $\cD_{n,1}$ and $\cE_{8,1}$ respectively.
\end{remark}

We can now give a simpler reformulation of Theorem~\ref{thm.classification} for the standard algebraic case. 

\begin{theorem} \label{thm.classification_new}
Let $\cT$ be a standard algebraic  2-CY triangulated category, not a cluster category, with a finite number of indecomposables. Then $\cT$ has a cluster-tilting object if and only if $\cT$ is either the category $\cA_{3n,3}$ for some $n \geq 1$, the category $\cD_{n \ell, \ell}$ for $n$ and $\ell$ such that $n \ell \geq 4$, or the category $\cE_{8,2}$.
\end{theorem}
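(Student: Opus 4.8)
The plan is to deduce Theorem~\ref{thm.classification_new} from the (adapted BIKR) classification Theorem~\ref{thm.classification} by a bookkeeping translation: for each row of the table in Theorem~\ref{thm.classification} I determine the integer $\ell$ for which $g^\ell = F$, and then read off the category as $\cA_{n,\ell}$, $\cD_{n,\ell}$ or $\cE_{n,\ell}$ via Definition~\ref{def.2cy.cats} (which is legitimate since, for standard algebraic $\cT$, the category is determined by its AR-quiver $\bbZ\Delta/g$ by \cite{A1}). The key inputs are the relations $g^3 = F$ (type $\bbA$), $g^{n/m}=F$ (type $\bbD_n$) and $g^2=F$ (type $\bbE_8$) recorded just before Definition~\ref{def.2cy.cats}; since these hold in both the even and odd sub-cases of the table, in each row the value of $\ell$ is independent of the parity of $n$ and of the precise form of $g$, so the translation reduces to matching the listed restrictions against the parametrization by $n\ell$ and $\ell$.

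For the clean rows the matching is immediate. In type $\bbA_n$ the relation gives $\ell=3$ in every admissible case, and the restriction $3\mid n$ forces $n=3n'$; hence these are exactly $\cA_{3n',3}$ with $n'\ge1$. For type $\bbE_8$ the single entry $g=\tau^8$ has $\ell=2$, giving precisely $\cE_{8,2}$. For type $\bbD_n$ with $n>4$ the relation gives $\ell=n/m$, so with $N:=n$ the category is $\cD_{N,\ell}$ with $\ell\mid N$; writing $N=n_0\ell$ (so $n_0=m$) this is $\cD_{n_0\ell,\ell}$, and as $(N,m)$ ranges over all pairs with $N>4$ and $m\mid N$ one obtains every $\cD_{n_0\ell,\ell}$ with $n_0\ell>4$. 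Throughout, the value $\ell=1$ (i.e.\ $g=F$) is exactly the cluster category $\bbZ\Delta/F$, which is removed by the standing hypothesis that $\cT$ is not a cluster category; this is what lets the $\bbD$-family be recorded with the single bound $n\ell\ge4$ while staying genuinely non-cluster, so that effectively $\ell\ge2$.

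The step I expect to be the real obstacle is the row $\bbD_4$, where $\Aut\bbZ\bbD_4=\bbZ\times S_3$ rather than $\bbZ\times\bbZ/2\bbZ$: the exceptional triality means $\ell$ no longer determines $g$ up to conjugacy, so I must analyse the admissible $g=\tau^m\sigma$ directly. Here $m\mid4$ together with $\sigma^{4/m}=1$ forces $\sigma\in\{1\}\cup\{\text{transpositions}\}$ (order-$3$ elements never satisfy $\sigma^{4/m}=1$ for any $m\mid4$), and computing $g^{4/m}=\tau^4\sigma^{4/m}=\tau^4=F$ shows $\ell=4/m\in\{1,2,4\}$. The case $m=4$ (so $\ell=1$, $\sigma=1$) is the cluster category, excluded by hypothesis; the excluded pair $(m,\sigma)=(1,1)$ is the degenerate $g=\tau$; and the remaining data give $\ell=2$ and $\ell=4$. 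I therefore have to confirm that these fill precisely the boundary slots $n\ell=4$, $\ell\in\{2,4\}$, namely $\cD_{4,2}$ and $\cD_{4,4}$. The delicate point is that, a priori, the non-conjugate elements $\tau^2$ and $\tau^2\sigma$ both satisfy $g^2=F$, so one must check carefully how these triality-twisted entries are reconciled with the notation $\cD_{4,\ell}$ and the uniqueness statement underlying Definition~\ref{def.2cy.cats}; this verification (showing the admissible $g$ give isomorphic categories, or are otherwise both subsumed under the $n\ell\ge4$ parametrization) is where the genuine work lies.

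Finally, assembling the four rows I would take the union of the resulting families — $\{\cA_{3n,3}:n\ge1\}$, $\{\cD_{n\ell,\ell}:n\ell\ge4\}$ (with the $\bbD_4$ analysis supplying the boundary case $n\ell=4$), and $\{\cE_{8,2}\}$ — and observe that it coincides with the list in the statement. Since Theorem~\ref{thm.classification} is itself an equivalence, this establishes both directions of the \emph{if and only if} simultaneously.
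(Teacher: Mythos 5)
Your proposal takes exactly the route the paper takes: the paper's entire proof of Theorem~\ref{thm.classification_new} is the single sentence that it is a direct consequence of \cite[7.0.5]{A1} and Theorem~\ref{thm.classification}, so the row-by-row bookkeeping you write out is precisely the content left implicit there. Your handling of the rows $\bbA_n$ (forced $\ell=3$, $3\mid n$), $\bbE_8$ (forced $\ell=2$), and $\bbD_n$ with $n>4$ (where $g=\tau^m\phi^{\bar m}$ resp.\ $\tau^m\phi$ is the \emph{unique} table entry for each divisor $m$, so $\ell=n/m$ is unambiguous), as well as the removal of $\ell=1$ by the ``not a cluster category'' hypothesis, is correct and is all the argument needs away from $\bbD_4$.

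The $\bbD_4$ point you flag is, however, a genuine gap in your write-up, and the resolution you hope for is the wrong one: the two admissible elements with $g^2=F$, namely $g=\tau^2$ and $g=\tau^2\sigma$ with $\sigma$ a transposition, give \emph{non-isomorphic} translation quivers. Indeed, in $\bbZ\bbD_4/\tau^2$ every vertex has $\tau$-period $2$, so there are four $\tau$-orbits, while in $\bbZ\bbD_4/\tau^2\sigma$ the two orbits of the vertices exchanged by $\sigma$ merge into a single $\tau$-orbit of length $4$, leaving only three $\tau$-orbits; since triangle equivalences of standard categories induce isomorphisms of AR-quivers, these cannot be the same category. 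So, read literally, the table of Theorem~\ref{thm.classification} produces two distinct categories competing for the single symbol $\cD_{4,2}$ of Definition~\ref{def.2cy.cats}. The way to close the gap is to show that the entry $(m,\sigma)=(2,\text{transposition})$ does not actually contribute: by the lifting argument of Proposition~\ref{prop.lift.cto} applied to the degree-two covering $\cC_{\bbD_4}\to\bbZ\bbD_4/\tau^2\sigma$, a cluster-tilting object there would correspond to a tagged triangulation of the once-punctured square invariant under the half-turn composed with the tag flip (the geometric model used in Section~\ref{section.2CY.finite.type}); no such triangulation exists, because the arcs at the puncture either all carry the same tag (and the flip then changes it) or consist of the two tagged copies of a single radius (and the half-turn then moves that corner). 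Hence only $\bbZ\bbD_4/\tau^2$ survives, $\cD_{4,2}$ is unambiguous, and the count of non-cluster $\bbD_4$-categories matches the two slots $\cD_{4,2}$, $\cD_{4,4}$. To be fair to you, the paper's one-line proof is completely silent on this (and its adapted table is, as written, over-inclusive at exactly this entry), so your proposal is no less complete than the paper's own argument; but the verification you defer is real work, and it is settled by the non-existence argument above (or by reverting to the original statement in \cite{BIKR}), not by an isomorphism between the two quotients.
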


\begin{proof} This is a direct consequence of \cite[7.0.5]{A1} and Theorem~\ref{thm.classification}. 
\end{proof}

 The following definition is useful for the description of the $2$-CY tilted algebras of finite type. It was first introduced in \cite{Dagfinn} in order to describe the cluster-tilted algebras of type $\bbD$.
\begin{definition} \label{def.connecting.vertex}
Let $Q$ be the quiver of a cluster-tilted algebra of type $\bbA$. A vertex of $Q$ is called a \emph{connecting vertex} if
\begin{enumerate}
 \item there are at most two arrows adjacent to it, and
 \item whenever there are two arrows adjacent to it, the vertex is on a $3$-cycle.
\end{enumerate} 
\end{definition}

We are now ready to present the main theorem of this section.

\begin{theorem}
Let $T$ be a cluster-tilting object in a standard algebraic  2-CY triangulated category $\cT$ of finite type, not a cluster category. Then $\End_{\cT} (T)$ is depicted in Figure~\ref{figure.2CYfinite}.
\end{theorem}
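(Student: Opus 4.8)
The plan is to reduce to the three families of categories produced by Theorem~\ref{thm.classification_new} and, in each case, to read off $\End_\cT(T)$ from a cluster-tilted algebra via a Galois covering.

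First I would invoke Theorem~\ref{thm.classification_new}: since $\cT$ is standard, algebraic, of finite type, and not a cluster category, it is one of $\cA_{3n,3}$ (for $n\ge 1$), $\cD_{n\ell,\ell}$ (for $n\ell\ge 4$), or $\cE_{8,2}$. In each case the cluster category $\cC$ of the same Dynkin type is $\bbZ\Delta/\langle F\rangle$, and since the defining automorphism $g$ of $\cT$ satisfies $g^{d}=F$ with $d=3,\ell,2$ respectively, the quotient $\bbZ\Delta/\langle g^{d}\rangle\to\bbZ\Delta/\langle g\rangle$ exhibits a Galois covering $\pi\from\cC\to\cT$ whose cyclic Galois group $G=\langle g\rangle/\langle g^{d}\rangle$ has order $d$. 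This is precisely the covering situation of Section~\ref{section.mut.loops.2cycles}, with $\cC$ upstairs (no loops or $2$-cycles, being a cluster category) and $\cT$ downstairs.

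Next I would lift. By Proposition~\ref{prop.lift.cto} the fibre $\wT:=\pi^{-1}(T)$ is a cluster-tilting object in $\cC$, so $\End_\cC(\wT)$ is a cluster-tilted algebra of type $\bbA_{3n}$, $\bbD_{n\ell}$, or $\bbE_8$. Its quiver $Q$ is determined by \cite{BMR1} and its relations come from the potential equal to the sum of the minimal cycles of $Q$ (Subsection~\ref{section.cluster.cats}). Since $\pi$ is a covering functor it induces a covering morphism of quivers, so the quiver of $\End_\cT(T)$ is the quotient $Q/G$; and because $\cC$ is a cluster category, the relations of $\End_\cT(T)$ are forced by the cover (the Remark after Theorem~\ref{theorem.mutation.loops}), i.e.\ they are the image of the potential. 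The problem therefore reduces to describing, for each type, the folded quivers-with-potential $Q/G$.

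Finally I would generate the list by mutation. The cluster-tilting objects of $\cT$ form a single mutation class, since cluster-tilting mutation is always possible by \cite{IY}; and Theorems~\ref{thm.mut_cycles.FZmut} and~\ref{theorem.mutation.loops} compute the effect of such mutation on the quiver, including at the loops and $2$-cycles created by the folding. Hence, starting from one explicit $T$ whose $\End_\cT(T)$ is obtained by folding the quiver of a standard cluster-tilting object visible in the AR-quiver, and applying these rules, one produces every $\End_\cT(T)$, grouped into mutation classes, and matches the entries of Figure~\ref{figure.2CYfinite}. For $\bbA_{3n}$ the $\bbZ/3$-action is a third-turn rotation and for $\bbE_8$ the $\bbZ/2$-action is an involution, so these reduce to short finite checks. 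I expect the main obstacle to be the type $\bbD$ case: here $\cD_{n\ell,\ell}$ is a genuinely two-parametric infinite family, the $\bbZ/\ell$-fold of the $\bbD$-quiver creates the loops and $2$-cycles that require the full mutation analysis of Sections~\ref{section.mutation.cycles}--\ref{section.mut.loops.2cycles}, and the substantive work is to verify that the resulting list is both exhaustive and free of repetitions.
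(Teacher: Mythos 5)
Your covering setup (invoking Theorem~\ref{thm.classification_new}, constructing the Galois covering $\pi\from\cC_\Delta\to\cT$, lifting $T$ via Proposition~\ref{prop.lift.cto}, and transferring quiver and relations through the cover) agrees with the paper's framework. But your enumeration step contains a genuine gap: you assert that the cluster-tilting objects of $\cT$ form a single mutation class ``since cluster-tilting mutation is always possible by \cite{IY}.'' This inference is invalid. Iyama--Yoshino guarantee that one can mutate any cluster-tilting object at any indecomposable summand, but this says nothing about whether any two cluster-tilting objects are connected by a chain of such mutations; a priori the exchange graph of $\cT$ could have several components, in which case generating from one explicit $T$ would miss entire families of endomorphism algebras and the resulting list would not be exhaustive. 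Indeed, the paper only \emph{remarks} that these finite categories are mutation connected \emph{after} the classification is proved (``One can check that \ldots''), i.e.\ connectedness is verified a posteriori from the completed list; using it as an input to produce the list is circular. The paper avoids this entirely: it classifies the $G$-invariant cluster-tilting objects upstairs directly, using the geometric models --- $g$-invariant triangulations of a regular $(3n+3)$-gon for $\cA_{3n,3}$ (where the key step is showing the longest diagonal must span an angle of exactly $2\pi/3$, forcing a central triangle), $g$-invariant tagged triangulations of a punctured $n\ell$-gon for $\cD_{n\ell,\ell}$, and a finite enumeration organized by $\tau$-orbits for $\cE_{8,2}$ --- and only then folds via $\pi$.

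A secondary weakness: even if connectedness were granted, your plan defers the real content of the theorem. For the two-parameter family $\cD_{n\ell,\ell}$ one must prove that the class of quivers-with-potential in Figure~\ref{figure.2CYfinite} is closed under mutation at every vertex (including vertices carrying loops and $2$-cycles, where Theorem~\ref{theorem.mutation.loops} applies only after verifying all conditions of Setup~\ref{setup.muation_loops} at each step), and that every algebra in the figure actually occurs. You acknowledge this as ``the substantive work'' but give no argument for it, whereas in the paper this is exactly what the triangulation analysis delivers: the diagonals through the puncture produce the central $q$-cycle, and the chords cut off polygon regions that contribute the type~$\bbA$ cluster-tilted algebras glued at connecting vertices.
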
  
  
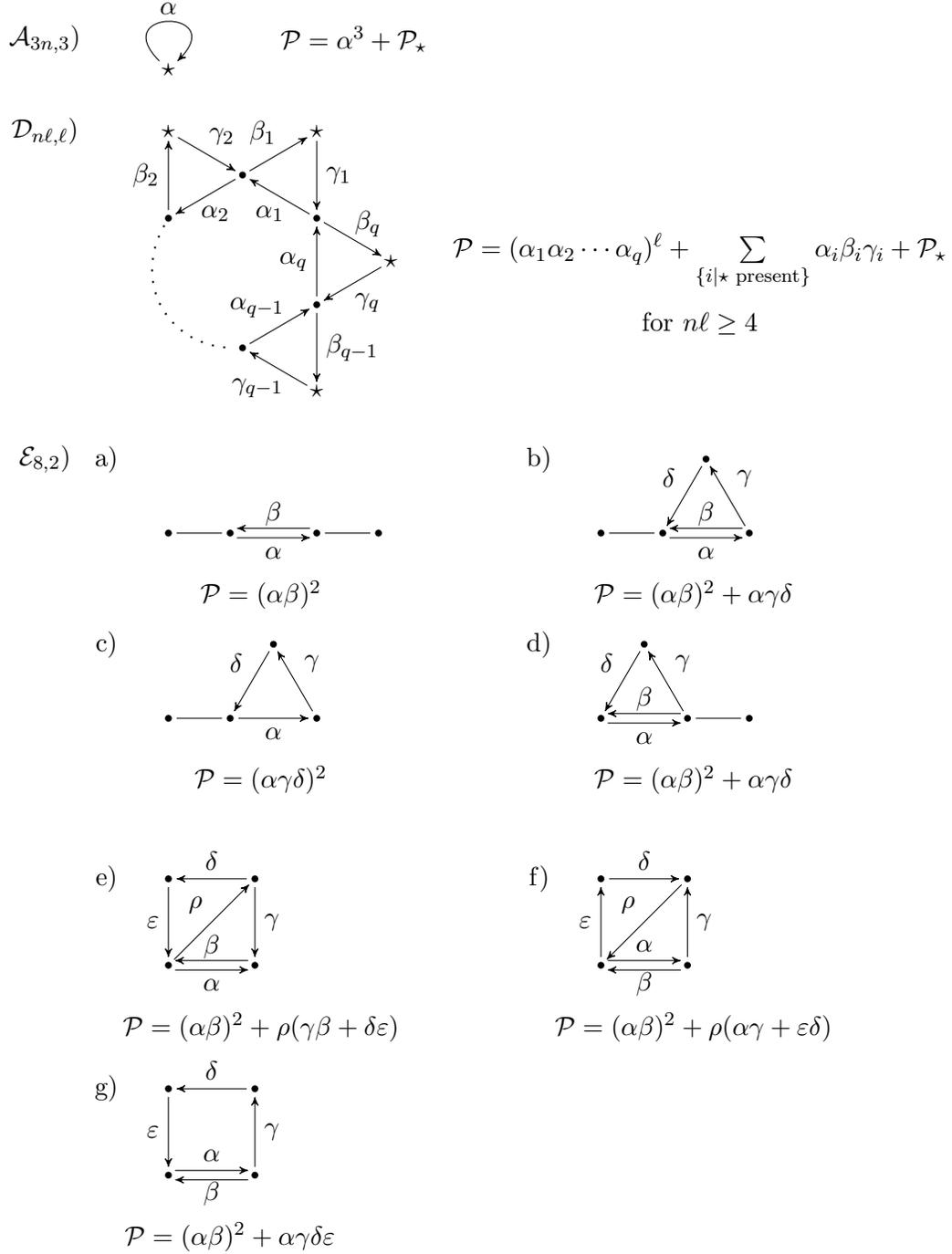
\begin{figure}
\[\scalebox{1}{ \begin{tikzpicture}[scale=0.9,yscale=-1]
 \node at (-1,.5) {$\cA_{3n,3}$)};
 \node (V) at (1,1) [inner sep=1pt] {$\star$};
 \draw [->] (V) .. controls (0,0) and (2,0) .. (V);
 \node at (1,0) {$\alpha$};
 \node at (4,0.5) {$\cP= \alpha^3 + \cP_\star$};
 \pgftransformshift{\pgfpoint{0cm}{-6cm}};
 \node at (-1,8) {$\cD_{n\ell, \ell}$)};
 \node (V3) at (1,9.4) [vertex] {};
 \node (V2) at (2.2,8.7) [vertex] {};
 \node (V1) at (3.4,9.4) [vertex] {};
 \node (V5) at (3.4,10.8) [vertex] {};
 \node (V4) at (2.2,11.5) [vertex] {};
 \node (V6) at (1,8) [inner sep=1pt] {$\star$};
 \node (V7) at (3.4,8) [inner sep=1pt] {$\star$};
 \node (V8) at (4.6,10.1) [inner sep=1pt] {$\star$};
 \node (V9) at (3.4,12.2) [inner sep=1pt] {$\star$};
 \draw [->] (V1) -- node [auto,xshift=2mm] {$\alpha_1$}(V2);
 \draw [->] (V2) -- node [auto,xshift=-2mm] {$\alpha_2$} (V3);
 \draw [thick, loosely dotted] (V3) .. controls (0.4,10.45) and (1,11.5) .. (V4);
 \draw [->] (V4) -- node [auto,xshift=2mm] {$\alpha_{q-1}$}(V5);
 \draw [->] (V5) -- node [auto] {$\alpha_q$}(V1);
 \draw [->] (V3) to node [auto] {$\beta_2$} (V6);
 \draw [->] (V6) to node [auto,xshift=-1mm] {$\gamma_2$}(V2);
 \draw [->] (V2) to node [auto,xshift=1mm] {$\beta_1$}(V7);
 \draw [->] (V7) to node [auto] {$\gamma_1$}(V1);
 \draw [->] (V1) to node [auto,xshift=-1mm,yshift=-1mm] {$\beta_q$}(V8);
 \draw [->] (V8) to node [auto,xshift=-1mm] {$\gamma_q$}(V5);
 \draw [->] (V5) to node [auto] {$\beta_{q-1}$}(V9);
 \draw [->] (V9) to node [auto,xshift=2mm] {$\gamma_{q-1}$}(V4);
 \node at (9.6,10.1) {$\cP= (\alpha_1 \alpha_2 \cdots
   \alpha_q)^{\ell} + \Sum_{ \{ i \mid \star \text{ present} \} } \alpha_i\beta_i\gamma_i + \cP_\star$};
 \node at (9.6,11.1) {for $n\ell \ge 4$};
 \pgftransformshift{\pgfpoint{0cm}{0.5cm}};
 \node at (-1,12.8) {$\cE_{8,2}$)};
 \node at (0,12.8) {a)};
 \node (W1) at (1,14) [vertex] {};
 \node (W2) at (2,14) [vertex] {};
 \node (W3) at (3.4,14) [vertex] {};
 \node (W4) at (4.4,14) [vertex] {};
 \node at (2.5,15) {$\cP = (\alpha\beta)^2$};
 \draw (W1) -- (W2);
 \draw [->] (W2.325) to node[auto,swap] {$\alpha$} (W3.215);
 \draw [->] (W3.145) to node[auto,swap,yshift=-1mm] {$\beta$} (W2.35);
 \draw (W3) -- (W4);
 \node at (7,12.8) {b)};
 \node (Z1) at (8,14) [vertex] {};
 \node (Z2) at (9,14) [vertex] {};
 \node (Z3) at (10.4,14) [vertex] {};
 \node (Z4) at (9.7,12.8) [vertex] {};
 \node at (9.5,15) {$\cP = (\alpha\beta)^2 + \alpha\gamma\delta$};
 \draw  (Z1) -- (Z2); 
 \draw [->] (Z2.325) to node[auto,swap] {$\alpha$} (Z3.215);
 \draw [->] (Z3.145) to  node[auto,swap,yshift=-1mm] {$\beta$} (Z2.35);
 \draw [->] (Z3) to node[auto,swap] {$\gamma$} (Z4); 
 \draw [->] (Z4) to node[auto,swap] {$\delta$} (Z2); 
 \node at (0,15.8) {c)};
 \node (Z1) at (1,17) [vertex] {};
 \node (Z2) at (2,17) [vertex] {};
 \node (Z3) at (3.4,17) [vertex] {};
 \node (Z4) at (2.7,15.8) [vertex] {};
 \node at (2.5,18) {$\cP = (\alpha\gamma\delta)^2$};
 \draw  (Z1) -- (Z2); 
 \draw [->] (Z2) to  node[auto,swap] {$\alpha$} (Z3);
 \draw [->] (Z3) to  node[auto,swap] {$\gamma$} (Z4); 
 \draw [->] (Z4) to  node[auto,swap] {$\delta$} (Z2); 
 \node at (7,15.8) {d)};
 \node (ZZ1) at (10.4,17) [vertex] {};
 \node (ZZ2) at (8,17) [vertex] {};
 \node (ZZ3) at (9.4,17) [vertex] {};
 \node (ZZ4) at (8.7,15.8) [vertex] {};
 \node at (9.5,18) {$\cP = (\alpha\beta)^2 + \alpha\gamma\delta$};
 \draw  (ZZ1) -- (ZZ3); 
 \draw [->] (ZZ2.325) to  node[auto,swap] {$\alpha$} (ZZ3.215);
 \draw [->] (ZZ3.145) to  node[auto,swap,yshift=-1mm] {$\beta$} (ZZ2.35);
 \draw [->] (ZZ3) to  node[auto,swap] {$\gamma$} (ZZ4); 
 \draw [->] (ZZ4) to  node[auto,swap] {$\delta$} (ZZ2); 
 \node at (0,19.6) {e)};
 \node (X1) at (1,21) [vertex] {};
 \node (X2) at (2.4,21) [vertex] {};
 \node (X3) at (2.4,19.6) [vertex] {};
 \node (X4) at (1,19.6) [vertex] {};
 \node at (2.5,22) {$\cP = (\alpha\beta)^2 + \rho(\gamma\beta + \delta\varepsilon)$};
 \draw [->] (X1.325) to node[auto,swap] {$\alpha$} (X2.215);
 \draw [->] (X2.145) to  node[auto,swap,yshift=-1mm] {$\beta$} (X1.35);
 \draw [->] (X3) --  node[auto] {$\gamma$} (X2);
 \draw [->] (X3) --  node[auto,swap] {$\delta$} (X4);
 \draw [->] (X1) --  node[auto] {$\rho$} (X3);
 \draw [->] (X4) --  node[auto,swap] {$\varepsilon$} (X1);
 \node at (7,19.6) {f)};
 \node (X1) at (8,21) [vertex] {};
 \node (X2) at (9.4,21) [vertex] {};
 \node (X3) at (9.4,19.6) [vertex] {};
 \node (X4) at (8,19.6) [vertex] {};
 \node at (9.5,22) {$\cP = (\alpha\beta)^2 + \rho(\alpha\gamma + \varepsilon\delta)$};
 \draw [->] (X1.35) to node[auto] {$\alpha$} (X2.145);
 \draw [->] (X2.215) to  node[auto,yshift=1mm] {$\beta$} (X1.325);
 \draw [->] (X2) --  node[auto,swap] {$\gamma$} (X3);
 \draw [->] (X4) --  node[auto] {$\delta$} (X3);
 \draw [->] (X3) --  node[auto,swap] {$\rho$} (X1);
 \draw [->] (X1) --  node[auto] {$\varepsilon$} (X4);
 \node at (0,23) {g)};
 \node (Y1) at (1,24.4) [vertex] {};
 \node (Y2) at (2.4,24.4) [vertex] {};
 \node (Y3) at (2.4,23) [vertex] {};
 \node (Y4) at (1,23) [vertex] {};
 \node at (2,25.4) {$\cP = (\alpha\beta)^2 + \alpha\gamma\delta\varepsilon$};
 \draw [->] (Y1.35) to node[auto] {$\alpha$} (Y2.145);
 \draw [->] (Y2.215) to  node[auto,yshift=1mm] {$\beta$} (Y1.325);
 \draw [->] (Y2) -- node[auto,swap] {$\gamma$} (Y3); 
 \draw [->] (Y3) -- node[auto,swap] {$\delta$} (Y4); 
 \draw [->] (Y4) -- node[auto,swap] {$\varepsilon$} (Y1);
\end{tikzpicture}} \]
\caption{\label{figure.2CYfinite} These are the 2-CY tilted algebras of finite type that are not cluster-tilted, organized by their mutation class. The relations are given in each case by the potential $\cP$. For the cases $\cA_{3n,3}$ and $\cD_{n\ell,\ell}$, the vertex $\star$ is a connecting vertex (see Definition~\ref{def.connecting.vertex}) where a cluster-tilted algebra of type $\bbA$ is glued, and the term $\cP_\star$ corresponds to the sum of the potentials of all  cluster-tilted algebras of type $\bbA$ attached at $\star$. In case $\cD_{n\ell,\ell}$, the  vertices $\star$ may or may not be present, and thus, the  corresponding $\beta$ and $\gamma$ arrows disappear (However, in the case $q=1$, we must have one connecting vertex $\star$).}
\end{figure}

\begin{proof}
Observe that we have a covering functor $\pi \from \cC_\Delta\to \cT$, where $\cC_\Delta$ is a cluster category of Dynkin type $\Delta$. We proceed by finding the cluster-tilted algebras in $\cC_\Delta$ which are a cover of the 2-CY tilted algebras in $\cT$. By Theorem~\ref{thm.classification_new} we have three cases:
 
{\bf Case $\cT=\cA_{3n,3}$.} Using the geometric description of the cluster category of type $\bbA$, we know that cluster-tilting objects correspond to triangulations of a regular $(3n+3)$-gon (see \cite{CCS1,I}). Observe that the automorphism $g$ corresponds to a rotation by $2 \pi / 3$. We want to find all the triangulations of the polygon invariant under $g$. 

Assume we are given such a $g$-invariant triangulation. Let $d$ be the longest diagonal which is part of the triangulation. If it covers an angle of more than $2 \pi / 3$ then the diagonals $d$ and $g \cdot d$ intersect in their interior, a contradiction. If all diagonals cover an angle of less than $2 \pi / 3$ then the shape which contains the center of the polygon cannot be a triangle, also a contradiction. Thus $d$ covers an angle of exactly $2 \pi / 3$, and $d$, $g \cdot d$, and $g^2 \cdot d$ form a regular triangle in the center of the regular $(3n+3)$-gon.

Next, we note that the remaining diagonals correspond to three identical triangulations of an $(n+2)$-gon, that is, a cluster-tilting object in the cluster category of type $\bbA_{n-1}$.

Now projecting back to $\cT$, we obtain a $2$-CY tilted algebra with a loop $\alpha$ corresponding to the orbit of the diagonal $d$, attached to a cluster-tilted algebra of type $\bbA$. Since the orbit of the diagonal $d$ is a triangle, the loop satisfies the relation $\alpha^2=0$, thus obtaining the quiver with relations depicted in Figure~\ref{figure.2CYfinite}~$\cA_{3n,n})$.

{\bf Case $\cT=\cD_{n\ell,\ell}$.} We can assume  $n\ell\ge 4$. In this case, cluster-tilting objects in the cluster category of $\bbD_{n\ell}$ correspond to (tagged) triangulations of a punctured $n\ell$-gon (see \cite{S}). Here the automorphism $g$ corresponds to a rotation by $2\pi/ \ell$  composed with $\phi^{n}$ where $\phi$ is the automorphism that exchanges the tagged diagonals with non-tagged diagonals. 

Assume we are given a $g$-invariant triangulation of the punctured $n \ell$-gon. By definition there is at least one diagonal connecting the puncture to the polygon. It follows that there are at least $\ell$ vertices of the polygon connected to the puncture by diagonals (the $\ell$ $g$-translates of any given one).

It follows that no vertex is connected two the puncture by more than one diagonal (hence we may ignore the question if edges are tagged or not).

Now consider all diagonals connecting vertices of the polygon to the puncture. Clearly they form a cycle of length $q \ell$ in the quiver of the cluster tilted algebra of type $\mathbb{D}_{n\ell}$, and hence a cycle of length $q$ in the quiver of our 2-CY tilted algebra of type $\mathcal{D}_{n\ell, \ell}$.

If two consecutive such diagonals start in consecutive vertices of the polygon, then the corresponding arrow of the $q$-cycle is not involved in any further cycles of the quiver of the 2-CY tilted algebra.

If two consecutive diagonals ending in the puncture start in vertices of the polygon which are further appart, then these vertices are connected by another diagonal. Moreover there is some triangulation of the part of the polygon cut off by this other diagonal. In the quiver of the 2-CY tilted algebra this means that the arrow of the $q$-cycle is involved in one further triangle, which connects it to the connecting vertex of some quiver of a cluster tilted algebra of type $\mathbb{A}$.

{\bf Case $\cT=\cE_{8,2}$.} This is a finite combinatorial task. It is simplified by the following observations:
\begin{enumerate} 
\item Numbering the $\tau$-orbits starting from the top most orbit and below, we have $8$ orbits, say $\sigma_1,\ldots,\sigma_8$ (see Figure~\ref{E8.numbering}).
\item The orbits $\sigma_1,\sigma_2$ and $\sigma_8$ are the only ones having exceptional objects. To a cluster-tilting object we can associate a triple $(a_1,a_2,a_8)$ of non-negative integers, where $a_i$ denotes the number of indecomposable summands in the orbit $\sigma_i$.
\item $a_i \in \{0,1,2\}$ for $i \in \{1,2,8\}$.
\end{enumerate}
 
 We consider the numbering of the indecomposable objects of $\cE_{8,2}$ as in Figure~\ref{E8.numbering}. The possible cluster-tilting objects up to a $\tau$-shift are illustrated in Table~\ref{table.E8}. Their endomorphism rings are depicted in Figure~\ref{figure.2CYfinite}~$\cE_{8,2})$.

\begin{table}[hb]
\begin{center}
  \renewcommand{\arraystretch}{1.4} 
  \setlength{\tabcolsep}{3mm} 
   \begin{tabular}{|c|c|c|}\hline 
\boldmath{$T$} & \boldmath $\End_\cT(T)$ & {\bf Type}  \\ \hline 
$ (0 \oplus 24,-,3 \oplus 27)$ & $g)$ & (2,0,2) \\ \hline
$(24 \oplus 48,-,27 \oplus 51)$ & $g)$& (2,0,2) \\ \hline
$(0 \oplus 24,-,3 \oplus 43)$ & $f)$ & (2,0,2)\\ \hline
$(24 \oplus 48,-,3 \oplus 27)$ & $f)$ & (2,0,2) \\ \hline 
$(0 \oplus 24,-,27 \oplus 51)$ & $e)$ & (2,0,2) \\ \hline
$(24 \oplus 48,-,11\oplus51)$ & $e)$ & (2,0,2) \\ \hline
$(24,28,11\oplus 51)$  & $c)$& (1,1,2)  \\ \hline
$(32,28,11 \oplus 51)$  & $c)$& (1,1,2)  \\ \hline
$(0 \oplus 24,28,51)$ & $d)$& (2,1,1) \\ \hline
$(0 \oplus 32,28,51)$  & $d)$& (2,1,1) \\ \hline
$ (24 \oplus 56,28,11)$  & $b)$ & (2,1,1)  \\ \hline
$(32 \oplus 56,28,11)$  & $b)$& (2,1,1) \\ \hline
$(0\oplus 24,28\oplus 60,-)$  & $a)$ & (2,2,0)  \\ \hline
$(0\oplus 32,28\oplus 60,-)$  & $a)$ & (2,2,0)  \\ \hline
$(24\oplus 56,28\oplus 60,-)$  & $a)$& (2,2,0)  \\ \hline
$(32\oplus 56,28\oplus 60,-)$  & $a)$ & (2,2,0)  \\ \hline
   \end{tabular}
\caption{\label{table.E8} Possible cluster-tilting objects up to $\tau$-shift in $\cE_{8,2}$.}
\end{center}
\end{table}

Thus the assertion from the theorem follows. \qedhere
\end{proof}

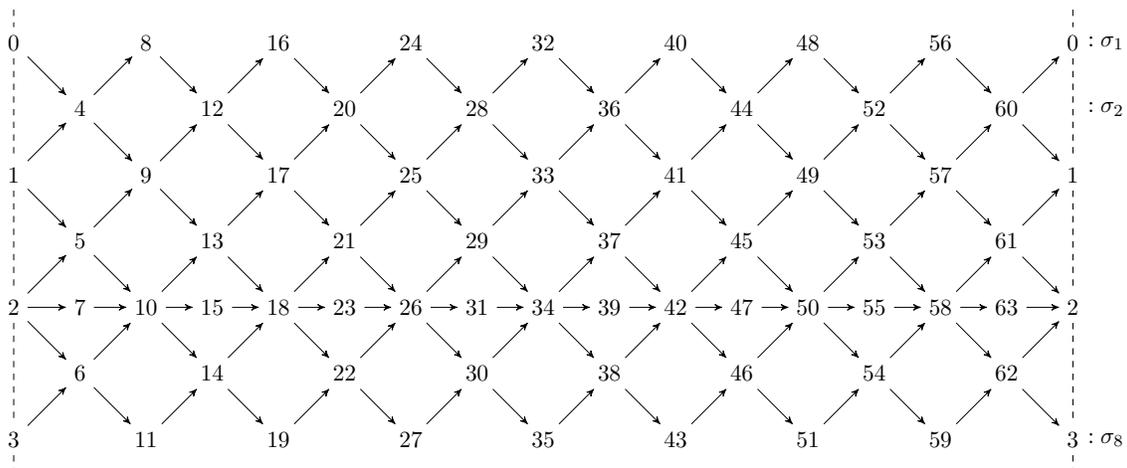
\begin{figure}[hb]
\[\scalebox{0.8}{ \begin{tikzpicture}[scale=1.1,yscale=-1]
 \foreach \x in {0,...,8}
  \foreach \y in {0,1,2,3}
   \node (\y-\x) at (\x*2,\y*2) {\pgfmathtruncatemacro{\NUMBER}{mod(\x*8+\y,64)}\NUMBER};
 \foreach \x in {0,...,7}
  \foreach \y in {4,5,6}
    \node (\y-\x) at (\x*2+1,\y*2-7) {\pgfmathtruncatemacro{\NUMBER}{mod(\x*8+\y,64)}\NUMBER};
 \foreach \x in {0,...,7}
  \foreach \y in {7}
   \node (\y-\x) at (\x*2+1,4) {\pgfmathtruncatemacro{\NUMBER}{mod(\x*8+\y,64)}\NUMBER};
\node at (16.5,0) {$:\sigma_1$};
\node at (16.5,1) {$:\sigma_2$};
\node at (16.5,6) {$:\sigma_8$};
 \foreach \xa/\xb in {0/1,1/2,2/3,3/4,4/5,5/6,6/7,7/8}
   \foreach \ya/\yb in {0/4,1/4,1/5,2/5,2/6,2/7,3/6}
   {
    \draw [->] (\ya-\xa) -- (\yb-\xa);
    \draw [->] (\yb-\xa) -- (\ya-\xb);
   }
\draw [dashed] (0,-0.5) -- (0-0.north);
\draw [dashed] (0-0.south) -- (1-0.north);
\draw [dashed] (1-0.south) -- (2-0.north);
\draw [dashed] (2-0.south) -- (3-0.north);
\draw [dashed] (3-0.south) -- (0,6.5); 
\draw [dashed] (16,-0.5) -- (0-8.north);
\draw [dashed] (0-8.south) -- (1-8.north);
\draw [dashed] (1-8.south) -- (2-8.north);
\draw [dashed] (2-8.south) -- (3-8.north);
\draw [dashed] (3-8.south) -- (16,6.5); 
\end{tikzpicture}} \]
\caption{\label{E8.numbering} Numbering of the vertices in the AR-quiver of the 2-CY triangulated category $\cE_{8,2}$.}
\end{figure}

\begin{remark} Using Theorem~\ref{theorem.mutation.loops}, we can now mutate at any vertex for a $2$-CY tilted algebra of finite type (see for instance Examples \ref{ex.mut.loop.d6} and \ref{ex.mut.loop.a9}). One can check that in these finite 2-CY categories, all cluster-tilting objects are mutation connected. We illustrate the mutation graph of $\cE_{8,2}$ in Figure~\ref{E8.mutation.component}.
\end{remark}

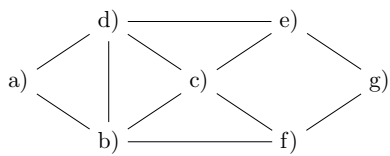
\begin{figure}
\[\scalebox{0.8}{ \begin{tikzpicture}[xscale=-1.5]
\node (a) at (2,1) {a)};
\node (b) at (1,0) {b)};
\node (c) at (0,1) {c)};
\node (d) at (1,2) {d)};
\node (e) at (-1,2) {e)};
\node (f) at (-1,0) {f)};
\node (g) at (-2,1) {g)};
\draw [-] (a) -- (b);
\draw [-] (a) -- (d);
\draw [-] (b) -- (c);
\draw [-] (c) -- (d);
\draw [-] (f) -- (g);
\draw [-] (g) -- (e);
\draw [-] (b) -- (f);
\draw [-] (d) -- (e);
\draw [-] (f) -- (c);
\draw [-] (c) -- (e);
\draw [-] (b) -- (d);
\end{tikzpicture}
}\] \caption{Mutation component of the 2-CY triangulated category $\cE_{8,2}$.} \label{E8.mutation.component}
\end{figure}

\clearpage

\section{Appendix} \label{appendix.ugly}
\subsection*{Proof of Theorem~\ref{rule.mut_min_cycle}} We have now the following cases:
\begin{itemize}
\item {$M'_{m,m}$.}  Using that $A^{l-1}A=A A^t = 1$, we see that $-A^{-1}(A - A^t)(-A) = A - A^t$. Thus, the arrows in $Q_{m,m}$ remain unchanged.
\item {$M'_{m,f}$.} Observing that $M'_{m,f} = -(M'_{f,m})^t$, it suffices to calculate just one of them.
\begin{align*}
 M'_{f, m} & = \left( (\Sum_{i=0}^{l-2}C_{i} \Sum^{i}_{j=0}A^j)(A-A^t) + C -B^t  \right)(-A) \\
& = \Sum_{i=0}^{l-2}C_{i}  \underbrace{ \left( -\Sum_{j=0}^i A^{j+2} + \Sum_{j=0}^i A^j - A \right) }_{=1-A^{i+1}-A^{i+2}} + \Sum_{i=0}^{l-3} \underbrace{B_{i}^t A}_{=C_{i}A^{i+2}} + B^t_{l-2} A \\
& = - \Sum_{i=0}^{l-2} C_{i} A^{i+1} + \Sum_{i=0}^{l-2} C_{i} - C_{l-2} \underbrace{A^l}_{=1} + B_{l-2}^t A  \\
& =  - \Sum_{i=0}^{l-3} \underbrace{C_{i} A^{i+1}}_{B_i^t} - C_{l-2}A^{l-1} + \Sum_{i=0}^{l-3} C_{i} + B_{l-2}^t A \\
& =  \Sum_{i=0}^{l-3} C_{i} - \Sum_{i=0}^{l-3} B_{i}^t + \left( A^{l-1}B_{l-2}\right)^t - C_{l-2}A^{l-1}. 
\end{align*}
 Thus any arrow $\gamma$ in $C_i$, with $1\le i \le l-3$ remains unchanged. Let $\gamma$ be an arrow in $C_{l-2}$. There is a path $\gamma \alpha^{l-1}$. We replace this arrow $\gamma$ by an arrow from the end of $\gamma \alpha^{l-1}$ to the start of $\gamma$. Apply the dual process for arrows in $Q_{m,f}$.

\item {$M'_{f,f}$.} This part of the matrix is composed of the following summands.
\begin{align*}
 M'_{f,f} =& \left(\Sum_{i=0}^{l-2}C_{i} \Sum^{i}_{j=0}A^j\right)(A-A^{-1})\left(\Sum_{i=0}^{l-2}C_{i} \Sum^{i}_{j=0}A^j\right)^t \\
& + \left(\Sum_{i=0}^{l-2}C_{i} \Sum^{i}_{j=0}A^j\right)(B - C^t) + (C - B^t)\left(\Sum_{i=0}^{l-2}C_{i} \Sum^{i}_{j=0}A^j\right)^t + (D-D^t).\\
\end{align*}

We divide the calculations in 4 steps:

 \begin{align*}
\intertext{1) Summands of the type $C_{l-2}(\cdots)C_{l-2}^t$. Denote by $\Sigma= \Sum_{j=0}^{l-1} A^j$. }
& C_{l-2} \left( \Sum_{j=0}^{l-2} A^j \right)(A - A^{-1}) \left(C_{l-2} \Sum_{j=0}^{l-2} A^j \right)^t + C_{l-2} \left(\Sum_{j=0}^{l-2} A^j \right)(- C_{l-2}^t) \\
& + C_{l-2}\left(C_{l-2} \Sum_{j=0}^{l-2} A^j \right)^t \\
&= C_{l-2} \left( (\Sigma - A^{-1} )(A - A^{-1})(\Sigma - A) + (\Sigma - A^{-1}) - (\Sigma - A)   \right)C_{l-2}^t \\
&= C_{l-2} \left( A - A^{-1} -A^{-1} +A \right)C_{l-2}^t = 0.\\
\intertext{2) Summands including $C_{l-2}$ and $B_{l-2}$. }
& C_{l-2} \left( \Sum_{j=0}^{l-2} A^j \right) B_{l-2} - \left( C_{l-2} \left( \Sum_{j=0}^{l-2} A^j \right)B_{l-2} \right)^t. \\
\intertext{3) Summands including $C_{l-2}$ or $B_{l-2}$ and terms of lower indices.}
&C_{l-2}\left( \Sum_{j=0}^{l-2} A^j \right)(A -
A^{-1})\left(\Sum_{i=0}^{l-3}C_i \Sum_{j=0}^{l} A^j \right)^t \\
&+ \left(  \Sum_{i=0}^{i}A^j \right)(A-A^{-1})\left( \Sum_{j=0}^{l-2} A^{-j} \right)C_{l-2}^t  \\
&+ C_{l-2}\left( \Sum_{j=0}^{l-2} A^j \right) \left( \Sum_{i=0}^{l-3}B_{i} - \Sum_{i=0}^{l-3}C_{i}^t  \right) + \left( \Sum_{i=0}^{l-3}\Sum_{j=0}^{i} C_{i}A^j \right)(B_{l-2} - C_{l-2}^t) \\
&+(C_{l-2} - B_{l-2}^t)\left( \Sum_{i=0}^{l-3}\Sum_{j=0}^{i} A^{-j} C_{i}^t \right) + \left( \Sum_{i=0}^{l-3} C_{i} - \Sum_{i=0}^{l-3}B_{i}^t \right)\left(\Sum_{j=0}^{l-2} A^{-j}\right)C_{l-2}^t \\
=&  \Sum_{i=0}^{l-3} C_{l-2} \left[   (\Sigma - A^{-1})(A - A^{-1})\left( \Sum_{j=0}^{i} A^{-j} \right) +  (\Sigma - A^{-1})(A^{-i-1}-1) + \left( \Sum_{j=0}^{i} A^{-j} \right) \right] C_{i}^t \\
&+  \Sum_{i=0}^{l-3} C_{i} \left[   \left( \Sum_{j=0}^{i} A^{j} \right)(A - A^{-1})(\Sigma - A) +\left( \Sum_{j=0}^{i} A^{-j} \right)(-1) + (1 - A^{i+1})(\Sigma - A)   \right] C_{l-2}^t \\
\end{align*}
\begin{align*}
& -\Sum_{i=0}^{l-3} B_{l-2}^t \left( \Sum_{j=0}^{i} A^{-j} \right) C_{i}^t +\Sum_{i=0}^{l-3} C_{i} \left( \Sum_{j=0}^{i} A^{j} \right) B_{l-2} \\
=& \Sum_{i=0}^{l-3} \left[ C_{l-2} \left( -A^{-1}(A + 1 - A^{-i} -A^{-i-1})-A^{-i-2} +A^{-1} + \Sum_{j=0}^{i}A^{-j}   \right)C_i^t  \right. \\
& \left. + C_i \left( (A^{i+1} +A^i - 1 - A^{-1})(-A) - \Sum_{j=0}^{i} A^j - A + A^{i+2}\right) C_{l-2}^t  \right] \\
& + \Sum_{i=0}^{l-3} \left[C_i \left(\Sum_{j=0}^{i}A^j\right)B_{l-2} - B_{l-2}^t\left( \Sum_{j=0}^{i}A^{-j} \right)C_i^t \right]  \\
=& \Sum_{i=0}^{l-3} \left[C_{l-2} \left(\Sum_{j=1}^{i+1}A^{-j}\right)C_{i}^t + C_{i}\left(- \Sum_{j=1}^{i+1}A^{j} \right)C_{l-2}^t \,\right] \\ 
& + \Sum_{i=0}^{l-3} \left[C_i \left(\Sum_{j=0}^{i}A^j\right)B_{l-2} - \left(C_i \left(\Sum_{j=0}^{i}A^j\right)B_{l-2}\right)^t\, \right] \\
=& \Sum_{i=0}^{l-3} \left[C_{l-2} \left(\Sum_{j=0}^{i}A^{j}\right)B_{i} + \left( C_{l-2} \left(\Sum_{j=0}^{i}A^{j}\right)B_{i}\right)^t \right.\\
&\left. + C_i \left(\Sum_{j=0}^{i}A^j\right)B_{l-2} - \left(C_i \left(\Sum_{j=0}^{i}A^j\right)B_{l-2}\right)^t \, \right]
\intertext{4) Summands without terms of index $l-2$.}
&\Sum_{i_1=0}^{l-3}\Sum_{i_2=0}^{l-3} C_{i_1} \left[  \left(\Sum_{j_1=0}^{i_i}A^{j_1}\right)(A-A^{-1})\left(\Sum_{j_2=0}^{i_2}A^{-j_2}\right) \right.\\ 
&\left. + \left(\Sum_{j_1=0}^{i_1}A^{j_1}\right)(A^{-i_2-1} - 1) + (1-A^{i_1+1})\left(\Sum_{j_2=0}^{i_2}A^{-j_2}\right)\right]C_{i_2}^t \\
=& \Sum_{i_1=0}^{l-3}\Sum_{i_2=0}^{l-3} C_{i_1} \left[ \Sum_{j_1=0}^{i_1-1}A^{j_1+1} + \Sum_{j_2=0}^{i_2}A^{i_1+1-j_2} - \Sum_{j_1=0}^{i_1} A^{j_1-1-i_2} \right.\\
&\left. - \Sum_{j_2=0}^{i_2-1}A^{-1-j_2} + \Sum_{j_1=0}^{i_1}A^{j_1-1-i_2} - \Sum_{j_1=0}^{i_1}A^{j_1} + \Sum_{j_2=0}^{i_2}A^{-j_2} - \Sum_{j_2=0}^{i_2} A^{i_1+1-j_2}\right]C_{i_2}^t\\
=& \Sum_{i_1=0}^{l-3}\Sum_{i_2=0}^{l-3} C_{i_1} \left[-1+1 \right]C_{i_2}^t =0.
\end{align*}

Thus, to sum up, we have
\begin{align*}
M'_{f\to f} =& D - D^t + C_{l-2} \left( \Sum_{j=0}^{l-2} A^j \right) B_{l-2} - \left( C_{l-2} \left( \Sum_{j=0}^{l-2} A^j \right)B_{l-2} \right)^t \\
& + \Sum_{i=0}^{l-3} \left[C_{l-2} \left(\Sum_{j=0}^{i}A^{j}\right)B_{i} + \left( C_{l-2} \left(\Sum_{j=0}^{i}A^{j}\right)B_{i}\right)^t \right.\\
&\left. + C_i \left(\Sum_{j=0}^{i}A^j\right)B_{l-2} - \left(C_i \left(\Sum_{j=0}^{i}A^j\right)B_{l-2}\right)^t \, \right].
\end{align*}

Hence, we keep all arrows in $Q_{f,f}$, and add an arrow for each composition $\gamma \alpha^i \beta$ where $\gamma\in Q_{f,m}$, $\alpha \in Q_{m,m}$, $\beta\in Q_{m,f}$ such that
\begin{itemize}
  \item neither $\gamma \alpha^i$ nor $\alpha^i \beta$ factors through an arrow in $Q_{f,f}$,
  \item either $\gamma \in C_{l-2}$ or $\beta \in B_{l-2}$, i.e.\ either $\gamma$ or $\beta$ has no extra relations with the minimal cycle of $Q_{m,m}$.
\end{itemize}
\item Finally, remove all loops and 2-cycles from the mutated quiver.
\end{itemize}

\end{document}